\DeclarePairedDelimiter\abs{\lvert}{\rvert}
\newcommand{\R}{\mathbb{R}}
\newcommand{\betastar}{\hat{\beta}^{\textstyle{*}}}
\newcommand{\betahat}{\hat{\beta}}
\newcommand{\betastarT}{\hat{\beta}^{\textstyle{*}^T}}
\newcommand{\betabar}{\bar{\beta}^{\textstyle{*}}}
\newcommand{\epstar}{\varepsilon^{\textstyle{*}}}
\newcommand{\epstarT}{\varepsilon^{{\textstyle{*}^T}}}
\newcommand{\epresstar}{\widehat{\varepsilon}^{\textstyle{*}}}
\newcommand{\epresstarT}{\widehat{\varepsilon}^{\textstyle{*}^T}}
\newcommand{\Xstar}{X^{\textstyle{*}}}
\newcommand{\Wstar}{W^{\textstyle{*}}}
\newcommand{\Wstarinv}{W^{{\textstyle{*}^{-1}}}}
\newcommand{\Zstar}{Z^{\textstyle{*}}}
\newcommand{\YstarT}{Y^{{\textstyle{*}^T}}}
\newcommand{\XstarT}{X^{\textstyle{*}^T}}
\newcommand{\Sigstar}{\widehat{\Sigma}^{\textstyle{*}}}
\newcommand{\Lnorm}{\mathcal{L}}
\newcommand{\Proj}{\mathcal{P}}
\newcommand{\X}{\mathbb{X}}
\newcommand{\Y}{\mathbb{Y}}
\newcommand{\Ymatstar}{\Y^{\textstyle{*}}}
\newcommand{\YmatstarT}{\Y^{\textstyle{*}^T}}
\newcommand{\Xmatstar}{\X^{\textstyle{*}}}
\newcommand{\XmatstarT}{\X^{\textstyle{*}^T}}
\newcommand{\vecop}[1]{\text{vec}\left( #1 \right)}
\newcommand{\vechop}[1]{\text{vech}\left( #1 \right)}
\DeclareMathOperator{\tr}{tr}
\DeclareMathOperator{\Var}{var}
\newtheorem{lem}{Lemma} 
\newtheorem{thm}{Theorem} 
\begin{document}



\begin{small}
\begin{center}

\noindent {\LARGE \bf Bootstrapping for multivariate linear regression models}

\vspace{.1in}

Daniel J. Eck 

\vspace{.01in}

\textit{Department of Biostatistics, Yale School of Public Health.} \\
\textit{daniel.eck@yale.edu}

\end{center}
\end{small}

\begin{abstract}
The multivariate linear regression model is an important tool for 
investigating relationships between several response variables and several 
predictor variables. The primary interest is in inference about 
the unknown regression coefficient matrix. We propose multivariate bootstrap 
techniques as a means for making inferences about the unknown regression 
coefficient matrix. These bootstrapping techniques are extensions of those 
developed in \cite{freedman}, which are only appropriate for univariate 
responses. Extensions to the multivariate linear regression model are
made without proof. We formalize this extension and prove its validity. 
A real data example and two simulated data examples which offer some finite 
sample verification of our theoretical results are provided.
\end{abstract}

\noindent Key Words: Multivariate Bootstrap; Multivariate Linear Regression Model; Residual Bootstrap

\section{Introduction}

The linear regression model is an important and useful tool in many 
statistical analyses for studying the relationship among variables.  
Regression analysis is primarily used for predicting values of the response 
variable at interesting values of the predictor variables, discovering the 
predictors that are associated with the response variable, and estimating 
how changes in the predictor variables affects the response variable 
\citep*{weisberg}. The standard linear regression methodology assumes that 
the response variable is a scalar. However, it may be the case that one is 
interested in investigating multiple response variables simultaneously. 
One could perform a regression analysis on each response separately in this 
setting. Such an analysis would fail to detect associations between responses. 
Regression settings where associations of multiple responses is of interest 
require a multivariate linear regression model for analysis. 

Bootstrapping techniques are well understood for the linear regression 
model with a univariate response \citep*{bickel, freedman}. In particular, 
theoretical justification for the residual bootstrap as a way to estimate the 
variability of the ordinary least squares (OLS) estimator of the regression 
coefficient vector in this model has been developed \citep*{freedman}. 
Theoretical extensions of residual bootstrap techniques appropriate for the 
multivariate linear regression model have not been formally introduced. The 
existence of such an extension is stated without proof and rather implicitly 
in subsequent works \citep*{freedman84, diaconis}. In this article we show 
that the bootstrap procedures in \cite{freedman} provide consistent estimates 
of the variability of the OLS estimator of the regression coefficient matrix 
in the multivariate linear regression model. Our proof technique follows 
similar logic as \cite{freedman}. The generality of the bootstrap theory 
developed in \cite{bickel} provide the tools required for our extension to 
the multivariate linear regression model.

\section{Bootstrap for the multivariate linear regression model}

The multivariate linear regression is
\begin{equation}\label{MVmodel}
Y_i = \beta X_i + \varepsilon_i, \quad (i = 1,...,n),
\end{equation}
where $Y_i \in \R^r$ and $r >1$ in order to have an interesting problem, 
$\beta \in \R^{r\times p}$, $X_i \in \R^p$, and the 
$\varepsilon_i's \in \R^r$ are errors having mean zero and 
variance-covariance matrix $\Sigma$ where $\Sigma > 0$. 
It is assumed that separate realizations from the model \eqref{MVmodel} are 
independent and that $n > p$. We further define $\X \in \R^{n\times p}$ as the 
design matrix with rows $X_i^T$, 
$\Y \in \R^{n \times r}$ is the matrix of responses with rows $Y_i^T$, and 
$\varepsilon \in \R^{n\times r}$ is the matrix of all errors with rows 
$\varepsilon_{i}^{T}$. The OLS estimator of $\beta$ in model \eqref{MVmodel} 
is $\hat{\beta} = \Y^T\X(\X^T\X)^{-1}$. We let 
$\widehat{\varepsilon} \in \R^{n\times r}$ denote the matrix of residuals 
consisting of rows $\widehat{\varepsilon}_i^{T} = (Y_i - \hat{\beta}X_i)^T$. 
The multivariate linear regression model assumed here is slightly 
different than the traditional multivariate linear regression model. The 
traditional model makes the additional assumptions that the errors are 
normally distributed and the design matrix $\X$ is fixed. 

We consider two bootstrap procedures that consistently estimate the 
asymptotic variability of $\text{vec}(\hat{\beta})$ under different 
assumptions placed upon the model \eqref{MVmodel}, where the $\text{vec}$ 
operator stacks the columns of a matrix so that 
$\text{vec}(\hat{\beta}) \in \R^{rp \times 1}$. The first bootstrap 
procedure is appropriate when the design matrix $\X$ is assumed to be fixed 
and the errors are constant. In this setup, residuals are resampled. The 
second bootstrap procedure is appropriate when $(X_i^T,\varepsilon_i^T)^T$ are 
realizations from a joint distribution. In this setup, cases $(X_i^T,Y_i^T)^T$ 
are resampled. It is known that bootstrapping under these setups provides a 
consistent estimator of the variability of $\Var(\hat{\beta})$ in 
model \eqref{MVmodel} when $r = 1$ \citep*{freedman}. We now provide the needed 
extensions. 


\subsection{Fixed design}

We first establish the residual bootstrap of \cite{freedman} when $\X$ is 
assumed to be a fixed design matrix.
Resampled, starred, data is generated by the model
\begin{equation} \label{starresid}
  \Ymatstar = \X\hat{\beta}^T + \epstar,
\end{equation}
where $\epstar \in \R^{n\times r}$ is the matrix of errors with rows being 
independent. The rows in $\epstar$ have common distribution 
$\widehat{F}_n$ which is the empirical distribution of the residuals from the 
original dataset, centered at their mean. Now 
$\betastar = \YmatstarT\X(\X^T\X)^{-1}$ is the OLS estimator of $\beta$ from 
the starred data. This process is performed a total 
of $B$ times with a new estimator $\betastar$ computed from \eqref{starresid} 
at each iteration. We then estimate the variability of 
$\text{vec}(\hat{\beta})$ with
$$
  \Var^{\textstyle{*}}\left\{\text{vec}(\hat{\beta})\right\} 
    = (B-1)^{-1}\sum_{b=1}^B\left\{\text{vec}(\betastar_b) 
      - \text{vec}(\betabar)\right\}
    \left\{\text{vec}(\betastar_b) 
      - \text{vec}(\betabar)\right\}^T
$$
where $\betastar_b$ is the residual bootstrap estimator of $\beta$ at 
iteration $b$ and $\betabar = B^{-1}\sum_{b=1}^B\betastar_b$. We summarize 
this bootstrap procedure in Algorithm 1.

\begin{center}
{\bf Algorithm 1.} Bootstrap procedure with fixed design matrix. \\
\begin{itemize}
  \item[Step 1.] Set $B$ and initialize $b = 1$.
  \item[Step 2.] Sample residuals from $\widehat{F}_n$, with replacement, 
    and compute $\Ymatstar$ as in \eqref{starresid}.
  \item[Step 3.] Compute $\betastar_b = \YmatstarT\X(\X^T\X)^{-1}$, store 
    $\text{vec}(\betastar_b)$, and let $b = b+1$.
  \item[Step 4.] Repeat Steps 2-3, iterating $b$ before returning to Step 2. 
  \item[Step 5.] When $b = B$, compute 
    $\Var^{\textstyle{*}}\left\{\text{vec}(\hat{\beta})\right\}$.
\end{itemize}
\end{center}

\vspace*{0.5cm}

Before the theoretical justification of the residual bootstrap is formally 
given, some important quantities are stated.
The residuals from the regression \eqref{starresid} are 
$\epresstar = \Ymatstar - \X\betastarT$. The variance-covariance matrix $\Sigma$ 
in model \eqref{MVmodel} is then estimated by 
$$
  \widehat{\Sigma} = n^{-1}\sum_{i=1}^n \widehat{\varepsilon}_i
    \widehat{\varepsilon}_i^T - \hat{\mu}^2, \qquad
  \hat{\mu}^2 = \left(n^{-1}\sum_{i=1}^n\widehat{\varepsilon}_i\right)
    \left(n^{-1}\sum_{i=1}^n\widehat{\varepsilon}_i\right)^T.
$$
Likewise, the variance-covariance estimate from the starred data is 
$$
  \Sigstar = n^{-1}\sum_{i=1}^n \epresstar_i\epresstarT_i 
    - \hat{\mu}^{\textstyle{*}^2}, \qquad 
  \hat{\mu}^{\textstyle{*}^2} = 
    \left(n^{-1}\sum_{i=1}^n\epresstar_i\right)
    \left(n^{-1}\sum_{i=1}^n\epresstar_i\right)^T.
$$
Let $I_k$ denote the $k \times k$ identity matrix. Theorem 1 provides 
bootstrap asymptotics for the regression model \eqref{MVmodel}. It extends 
Theorem 2.2 of \cite{freedman} to the multivariate setting.

\begin{thm} \label{MVboot-MVLR}
Assume the regression model \eqref{MVmodel} where the errors have finite 
fourth moments. Suppose that $n^{-1}\X^T\X \to \Sigma_{X} > 0$. Then, 
conditional on almost all sample paths $Y_1,...,Y_n$, as $n \to \infty$,
\begin{itemize}
\item[a)] 
$
  \surd{n}\left\{\text{vec}(\hat{\beta}^{\textstyle{*}}) 
  - \text{vec}(\hat{\beta})\right\} 
  \to^d 
  N(0, \Sigma_{X}^{-1} \otimes \Sigma)
$,
\item[b)] $\Sigstar \to_p\Sigma$, and
\item[c)] 
$
  \left\{ (\X^T\X)^{1/2} \otimes \widehat{\Sigma}^{\textstyle{*}^{-1/2}}
  \right\}\left\{\text{vec}(\betastar) - \text{vec}(\hat{\beta})\right\}
  \to^d 
  N(0,I_{rp})
$
\end{itemize}
\end{thm}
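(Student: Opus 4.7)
The starting point is the algebraic identity
\[
  \betastar - \betahat \;=\; \epstarT\,\X\,(\X^T\X)^{-1},
\]
obtained by substituting \eqref{starresid} into the OLS formula for $\betastar$ and cancelling the term $\X\betahat^T$. Applying $\vecop{\cdot}$ together with $\vecop{ab^T}=b\otimes a$ yields
\[
  \vecop{\betastar - \betahat} \;=\; \sum_{i=1}^n \bigl\{(\X^T\X)^{-1}X_i\bigr\}\otimes \epstar_i,
\]
which, conditional on the data, is a sum of independent centred random vectors in $\R^{rp}$. This single identity drives everything that follows.

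For part (a) the plan is to apply a triangular-array multivariate CLT conditionally on the sample path. Since the $\epstar_i$ are i.i.d.\ from $\widehat F_n$ with mean zero and covariance $\Sigres$, the conditional variance of the displayed sum equals
\[
  \sum_{i=1}^n (\X^T\X)^{-1}X_iX_i^T(\X^T\X)^{-1}\otimes \Sigres \;=\; (\X^T\X)^{-1}\otimes \Sigres,
\]
so $n$ times it tends to $\Sigma_X^{-1}\otimes\Sigma$ via the design hypothesis and the standard almost-sure consistency $\Sigres\to\Sigma$ (which itself follows from $\betahat\to\beta$, $n^{-1}\X^T\X=O(1)$, and the SLLN applied to $\varepsilon_i\varepsilon_i^T$). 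The Lindeberg condition then follows from the finite fourth moment hypothesis together with $\max_{i\le n}\|X_i\|^2/n\to 0$, which is itself a consequence of $n^{-1}\X^T\X\to\Sigma_X$; the remainder is Cram\'er--Wold, or a direct multivariate Lindeberg--Feller argument, mirroring the scalar case in \cite{freedman} and subsumed by the general bootstrap framework of \cite{bickel}.

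For part (b) I would use the decomposition $\epresstar_i = \epstar_i - (\betastar-\betahat)X_i$ and the cross-term cancellation $\epstarT\,\X = (\betastar-\betahat)(\X^T\X)$ to obtain
\[
  \Sigstar \;=\; n^{-1}\sum_{i=1}^n \epstar_i \epstarT_i \;-\; (\betastar-\betahat)(n^{-1}\X^T\X)(\betastar-\betahat)^T \;-\; \hat\mu^{{\textstyle{*}}^2}.
\]
The first term converges conditionally in probability to $\Sigres$, and hence to $\Sigma$, by the bootstrap LLN; the second is $O_p(n^{-1})$ by part (a); and $\hat\mu^{{\textstyle{*}}^2}=O_p(n^{-1})$ because $n^{-1}\sum\epresstar_i$ is $O_p(n^{-1/2})$ conditionally. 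Part (c) is then a consequence of (a), (b), Slutsky, and the Kronecker identity $(A\otimes B)(C\otimes D)=AC\otimes BD$: rewriting the left-hand side as $\{(n^{-1}\X^T\X)^{1/2}\otimes \Sigstarhalfinv\}\,\sqrt n\,\vecop{\betastar-\betahat}$, the premultiplier converges in probability to $\Sigma_X^{1/2}\otimes\Sigma^{-1/2}$ by continuous mapping, and this transforms the covariance $\Sigma_X^{-1}\otimes\Sigma$ into $I_p\otimes I_r = I_{rp}$.

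The substantive work lies in part (a): the CLT is \emph{conditional} and must hold on almost all sample paths, so both the weights $(\X^T\X)^{-1}X_i$ and the common distribution $\widehat F_n$ of the $\epstar_i$ depend on $n$. Checking the Lindeberg condition while handling these two sources of $n$-dependence simultaneously is the only place that genuinely needs care; parts (b) and (c) then reduce, after the algebraic identities above, to routine LLN and Slutsky arguments.
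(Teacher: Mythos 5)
Your plan is correct, but it takes a genuinely different route from the paper. The paper never runs a Lindeberg--Feller argument: it works entirely in the Mallows metric. Theorem~\ref{identity} bounds $\left[d_2^{rp}\{\Psi_n(F),\Psi_n(G)\}\right]^2$ by $nr\tr\{(\X^T\X)^{-1}\}\{d_2^r(F,G)\}^2$ for any two error laws, Lemmas~\ref{lem1}--\ref{lem5} show $d_2^r(\widehat{F}_n,F)\to 0$ a.s., and part (a) follows because the conditional law of $\surd{n}\,\vecop{\betastar-\betahat}$ is exactly $\Psi_n(\widehat{F}_n)$ while $\Psi_n(F)$ tends to $N(0,\Sigma_X^{-1}\otimes\Sigma)$. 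In effect the single metric bound replaces your triangular-array CLT, and the a.s.\ Mallows convergence of $\widehat{F}_n$ to $F$ (driven by Lemma~\ref{lem4}, i.e.\ $n^{-1}\tr\{(\res-\varepsilon)^T(\res-\varepsilon)\}\to 0$ a.s.) is precisely the ingredient your sketch underweights: "finite fourth moments plus $\max_{i\le n}\|X_i\|^2/n\to 0$" is not enough for the Lindeberg condition on almost every sample path, because the resampling law $\widehat{F}_n$ itself moves with $n$; you must additionally show $E^{\textstyle{*}}\left[\|\epstar_1\|^2 1\{\|\epstar_1\|>M_n\}\right]\to 0$ a.s.\ for $M_n\to\infty$, which is where the Lemma~\ref{lem4}-type control of $\res-\varepsilon$ enters. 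For part (b) your identity $\Sigstar=n^{-1}\sum_i\epstar_i\epstarT_i-(\betastar-\betahat)(n^{-1}\X^T\X)(\betastar-\betahat)^T-\hat{\mu}^{\textstyle{*}^2}$ is correct, but the "bootstrap LLN" for the leading term has the same changing-law subtlety; the paper handles it by comparing $\vechop{\epstar_1\epstarT_1}$ with $\vechop{\varepsilon_1\varepsilon_1^T}$ in $d_1$ via Lemmas 8.5--8.6 of \cite{bickel}, together with Lemma~\ref{lem6} to pass from raw to centered second moments. Part (c) is the same Slutsky/Kronecker computation in both treatments. What your route buys is elementarity and a clear accounting of where each hypothesis is used; what the paper's route buys is that one metric inequality dispatches the triangular-array issues uniformly and the identical machinery is reused for the random-design Theorem~\ref{MVboot-MVLR-randomX}. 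With the uniform-integrability steps filled in, your argument would be a complete alternative proof.
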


The proof of Theorem~\ref{MVboot-MVLR}, along with the details of several 
necessary lemmas and theorems, are included in the theoretical details 
section. Theorem~\ref{MVboot-MVLR} establishes the multivariate analogue for 
the residual bootstrap. This theorem shows that standard error estimation of 
the estimated $\beta$ matrix obtained through bootstrapping, is 
$\surd{n}$-consistent. Now let $f:\R^{rp}\to\R^k$ be a differentiable 
function. Then the conclusions of Theorem~\ref{MVboot-MVLR} can be applied to 
establish a multivariate delta method based on estimates obtained via the 
residual bootstrap. This immediately follows from a first order Taylor 
expansion and some algebra arriving at
\begin{equation} \label{MVdelta}
  \surd{n}\left[ f\left\{\text{vec}(\hat{\beta}^{\textstyle{*}})\right\} 
      - f\left\{\text{vec}(\hat{\beta})\right\}\right] 
  = \nabla f\left\{\text{vec}(\hat{\beta})\right\} 
      \surd{n}\left\{\text{vec}(\hat{\beta}^{\textstyle{*}}) 
      - \text{vec}(\hat{\beta})\right\} + O_{p}(n^{-1/2}).
\end{equation}
Therefore \eqref{MVdelta} converges weakly to a normal distribution with mean 
zero and variance given by 
$$
  \nabla f\left\{\vecop{\beta}\right\} \left(\Sigma_{X}^{-1} 
  \otimes \Sigma\right) \nabla^T f\left\{\vecop{\beta}\right\}
$$ 
as $n\to\infty$.

\subsection{Random design and heteroskedasticity}

In this section we assume that the $X_i$s in model \eqref{MVmodel} are 
realizations of a random variable $X$. The regression coefficient matrix 
$\beta$ now takes the form 
$
  \beta = E(YX^T)\Sigma_X^{-1}
$
where $\Sigma_X = E(XX^T)$ and it is assumed that $\Sigma_X > 0$. 
Now that $X$ is stochastic, there may be some association 
between $X$ and the errors $\varepsilon$. The possibility of 
heteroskedasticity means that we need to alter the bootstrap procedure 
outlined in the previous section in order to consistently estimate the 
variability of $\text{vec}(\betahat)$.

It is assumed that the data vectors $(X_i^T, Y_i^T)^T \in \R^{p+r}$ are  
independent, with a common distribution $\mu$ and 
$E(\|(X_i^T, Y_i^T)^T\|^4) < \infty$ where $\|\cdot\|$ is the Euclidean norm. 
Unlike the fixed design setting, data pairs $(X_i^T, Y_i^T)^T$ are resampled 
with replacement to form the starred data $(\XstarT_i, \YstarT_i)^T$, 
for $i =1,...,n$. Given the original sample, 
$(X_i^T, Y_i^T)^T$, $i = 1,...,n$, the resampled vectors are independent, 
with distribution $\mu_n$. Denote $\Xmatstar \in \R^{n\times p}$ 
and $\Ymatstar \in \R^{n\times r}$ as the matrix with rows $\XstarT_i$ and 
$\YstarT_i$ respectively. The starred estimator of $\beta$ obtained from 
resampling is then 
$
  \betastar = \YmatstarT\Xmatstar\left(\XmatstarT\Xmatstar\right)^{-1}.
$
For every $n$ there is positive probability, albeit low, that 
$\XmatstarT\Xmatstar$ is singular, and the probability of singularity decreases 
exponentially in $n$. We assume that displayed equation (1.17) in 
\cite{chatterjee} holds in order to circumvent singularity in our bootstrap 
procedure.

The bootstrap is performed a total of $B$ times with a new estimator 
$\betastar$ computed at each iteration. We then estimate the variability of 
$\text{vec}(\hat{\beta})$ with
$$
  \Var^{\textstyle{*}}\left\{\text{vec}(\hat{\beta})\right\} 
    = (B-1)^{-1}\sum_{b=1}^B\left\{\text{vec}(\betastar_b) 
      - \text{vec}(\betabar)\right\}
    \left\{\text{vec}(\betastar_b) 
      - \text{vec}(\betabar)\right\}^T
$$
where $\betastar_b$ is the bootstrap estimator of $\beta$ at iteration $b$ 
and $\betabar = B^{-1}\sum_{b=1}^B\betastar_b$. We summarize this bootstrap 
procedure in Algorithm 2.

\begin{center}
{\bf Algorithm 2.} Bootstrap procedure with random design matrix. \\
\begin{itemize}
  \item[Step 1.] Set $B$ and initialize $b = 1$.
  \item[Step 2.] Resample $(X_i^T, Y_i^T)^T$ with replacement.
  \item[Step 3.] Compute 
  $
    \betastar_b = \YmatstarT\Xmatstar(\XmatstarT\Xmatstar)^{-1}
  $, store $\text{vec}(\betastar_b)$.
  \item[Step 4.] Repeat Steps 2-3, iterating $b$ before returning to Step 2.
  \item[Step 5.] When $b = B$, compute 
    $\Var^{\textstyle{*}}\left\{\text{vec}(\hat{\beta})\right\}$.
\end{itemize}
\end{center}

\vspace*{0.5cm}

We now show that the variability of $\text{vec}(\betahat)$ is estimated 
consistently by our multivariate bootstrap procedure which resamples cases. 
Let $M$ be a non-negative definite matrix with entries 
$
  M_{jk} = E\left\{\text{vec}(X_i\varepsilon_i^T)_j
    \text{vec}(X_i\varepsilon_i^T)_k\right\}
$
for $j,k = 1,...,rp$ and define
$
  \Delta = \left(\Sigma_X^{-1} \otimes I_r\right)M
    \left(\Sigma_X^{-1} \otimes I_r\right).
$
where $n^{-1}\X^T\X \to \Sigma_X$ a.e. as $n\to\infty$. Then
\begin{equation} \label{asymp}
  \begin{split}
    \sqrt{n}\text{vec}\left(\betahat - \beta\right) 
      &= \text{vec}\left\{
        n^{-1/2}\varepsilon^T\X(n^{-1}\X^T\X)^{-1}
        \right\} \\
      &= \left\{(n^{-1}\X^T\X)^{-1} \otimes I_r\right\}
        \text{vec}\left(n^{-1/2}\varepsilon^T\X\right)
      \to N(0, \Delta).  
  \end{split}    
\end{equation}
The next theorem states that 
$\sqrt{n}\text{vec}\left(\betastar - \betahat\right)$ is the same 
as \eqref{asymp}. This is an extension of Theorems 3.1 and 3.2 of 
\cite{freedman} to the multivariate linear regression setting.

\begin{thm} \label{MVboot-MVLR-randomX}
Assume that $(X_i^T, Y_i^T)^T \in \R^{p+r}$ are independent, with a common 
distribution $\mu$, $E(\|(X_i^T,Y_i^T)^T\|^4) < \infty$, and 
$\Sigma_X = E(XX^T)$ is positive definite. Then, conditional on almost all 
sample paths, $(X_i^T, Y_i^T)^T$, $i = 1,...,n$, as $n\to\infty$,
\begin{itemize}
  \item[a)] $n^{-1}\left(\XmatstarT\Xmatstar\right) \to_p\Sigma_X$, 
  \item[b)] 
  $
    \sqrt{n}\left\{\text{vec}(\betastar) - \text{vec}(\betahat)\right\}
    \to^d
    N(0, \Delta)
  $, and
  \item[c)] the sequence $\Sigstar \to_p \Sigma$.
\end{itemize}
\end{thm}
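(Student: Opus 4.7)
The plan is to follow the structure of Theorems~3.1 and 3.2 of \cite{freedman}, working componentwise in the matrix setting and invoking a conditional multivariate Lindeberg--Feller CLT in place of the univariate one. Throughout, $E^*$ and $\to_p$ are taken conditional on the data, and all claims are asserted on a set of sample paths of probability one.

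Part (a) is essentially a conditional law of large numbers. Given the original sample, the $X_i^*$ are i.i.d.\ with conditional second-moment matrix $E^*(X_i^* X_i^{*T}) = n^{-1}\X^T\X$, which tends almost surely to $\Sigma_X$ by the strong law applied to $\mu$. The conditional variance of each entry of $n^{-1}\XmatstarT\Xmatstar$ is bounded by $n^{-1}\cdot n^{-1}\sum_j\|X_j\|^4 = O(n^{-1})$ a.s.\ by the fourth-moment hypothesis, so a conditional Chebyshev inequality applied entrywise gives convergence in probability to $\Sigma_X$.

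For part (b), set $\xi_i^* := Y_i^* - \betahat X_i^*$. The normal equations $\X^T\res = 0$ imply that $\vecop{\xi_i^* X_i^{*T}}$ has conditional mean zero under resampling. Using $\vecop{AB} = (B^T \otimes I_r)\vecop{A}$ one may write
\begin{equation*}
  \sqrt{n}\,\vecop{\betastar - \betahat} = \left\{\left(n^{-1}\XmatstarT\Xmatstar\right)^{-1} \otimes I_r\right\} \cdot n^{-1/2}\sum_{i=1}^n \vecop{\xi_i^* X_i^{*T}}.
\end{equation*}
By part (a) the prefactor tends in probability to $\Sigma_X^{-1}\otimes I_r$. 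For the normalized sum, apply a conditional Lindeberg--Feller CLT to the i.i.d.\ (given the data) triangular array $\vecop{\xi_i^* X_i^{*T}}$; its conditional covariance equals $n^{-1}\sum_{j=1}^n \vecop{\res_j X_j^T}\vecop{\res_j X_j^T}^T$. Writing $\res_j = \varepsilon_j - (\betahat - \beta)X_j$ and using $\betahat \to \beta$ a.s., together with finite fourth moments of $(X^T,Y^T)^T$ (which imply $E\|\varepsilon\|^4 < \infty$), the SLLN gives convergence of this array to $M$ almost surely. Slutsky's theorem then delivers the stated Gaussian limit with covariance $(\Sigma_X^{-1}\otimes I_r)M(\Sigma_X^{-1}\otimes I_r) = \Delta$.

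For part (c), decompose $\epresstar_i = \xi_i^* - (\betastar - \betahat)X_i^*$ and expand
\begin{equation*}
  n^{-1}\sum_{i=1}^n \epresstar_i \epresstar_i^{T} = n^{-1}\sum_{i=1}^n \xi_i^* \xi_i^{*T} - 2\,\mathrm{Sym}\!\left\{(\betastar - \betahat)\left(n^{-1}\sum_i X_i^*\xi_i^{*T}\right)^T\right\} + (\betastar-\betahat)\left(n^{-1}\XmatstarT\Xmatstar\right)(\betastar-\betahat)^T.
\end{equation*}
Parts (a) and (b) force the last two terms to $o_p(1)$; a conditional LLN then yields $n^{-1}\sum_i \xi_i^*\xi_i^{*T} \to_p n^{-1}\sum_j \res_j\res_j^T$, and the right side converges a.s.\ to $E(\varepsilon\varepsilon^T)$ by the SLLN applied to the original data (again using $\betahat \to \beta$). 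An analogous argument shows $\hat{\mu}^{*2}$ tends to $E(\varepsilon)E(\varepsilon)^T$, so subtracting leaves $\Sigma$. The main obstacle is verifying the conditional Lindeberg condition almost surely in part (b): one must show $n^{-1}\sum_j \|\vecop{\res_j X_j^T}\|^2 \mathbf{1}\{\|\vecop{\res_j X_j^T}\| > \epsilon\sqrt{n}\} \to 0$ on almost every sample path, which reduces via the identity $\res_j = \varepsilon_j - (\betahat-\beta)X_j$ and Markov's inequality to a standard truncation argument using the fourth-moment hypothesis.
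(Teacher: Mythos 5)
Your proposal is correct in outline, but it takes a genuinely different route from the paper. The paper works almost entirely inside the Mallows-metric framework of Bickel and Freedman: after the same algebraic decomposition $\sqrt{n}\,\vecop{\betastar - \betahat} = (\Wstarinv \otimes I_r)\vecop{\Zstar}$ with $\Zstar = n^{-1/2}\epstarT\Xmatstar$, it invokes Lemma 8.7 of \cite{bickel} to bound $d_2^{rp}\{\vecop{\Zstar},\vecop{Z}\}$ by the distance between the single-summand laws, and then uses $d_4^{p+r}(\mu_n,\mu)\to 0$ a.e.\ (Lemma~\ref{3.2new}) together with the continuity statements of Lemma~\ref{3.1new} to conclude; part c) recycles the $\text{vech}$/Mallows argument from the proof of Theorem~\ref{MVboot-MVLR}. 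You instead prove the conditional CLT directly: you check that the normal equations make $\vecop{\xi_i^* X_i^{*T}}$ conditionally centered, show the conditional covariance $n^{-1}\sum_j \vecop{\res_j X_j^T}\vecop{\res_j X_j^T}^T \to M$ a.s.\ via $\res_j = \varepsilon_j - (\betahat-\beta)X_j$ and the SLLN, and verify a conditional Lindeberg condition by truncation; parts a) and c) are handled by conditional Chebyshev arguments. Both routes are valid. What the paper's approach buys is that convergence in $d_2$ carries convergence of second moments along with weak convergence, which is what ultimately licenses using $\Var^{\textstyle{*}}\{\vecop{\betahat}\}$ as a consistent variance estimator; your CLT argument delivers only the distributional limit stated in the theorem (which is all the theorem claims), and in exchange is self-contained and avoids the Bickel--Freedman machinery. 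The one place your argument needs real care is the almost-sure Lindeberg verification, which you correctly identify and sketch: the bound $\|\vecop{\res_jX_j^T}\|^2 \leq 2\|\varepsilon_j\|^2\|X_j\|^2 + 2\|\betahat-\beta\|^2\|X_j\|^4$ together with $E(\|\varepsilon\|^2\|X\|^2)<\infty$ and the standard fact that $n^{-1}\sum_{j\leq n} W_j\mathbf{1}\{W_j > cn\} \to 0$ a.s.\ for i.i.d.\ integrable $W_j \geq 0$ closes that gap.
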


The proof of Theorem~\ref{MVboot-MVLR-randomX}, along with necessary lemmas, 
are included in the theoretical details section.

\section{Examples}

\subsection{Simulations}

In this section we provide two simulated examples which show support for our 
multivariate bootstrap procedures.

\subsubsection{Fixed design}

This example illustrates Theorem~\ref{MVboot-MVLR}. We generated 
data according to the multivariate linear regression model~\eqref{MVmodel} 
where $Y_i \in \R^3$, $X_i \in \R^2$, and both $\beta$ and $\Sigma$ are 
prespecified. Our goal is to make inference about $\text{vec}(\beta)$ using 
confidence regions. For each component of $\beta$, a 95\% percentile interval 
computed using the residual bootstrap in Algorithm 1 is compared with a 95\% 
confidence interval that assumes model \eqref{MVmodel} is correct. Four data 
sets were generated at different sample sizes and the performance of the 
multivariate residual bootstrap is assessed. The bootstrap is performed 
$B = 4n$ times in each dataset. The results are displayed in 
Table~\ref{boot-ex1-tab1}. For the first two components of $\beta$, we see 
that the confidence regions obtained from both methods are close to each 
other and that the distance between the two shrinks as $n$ increases. 
Similar results are obtained for the other components of $\beta$.

\begin{table}[h!]
\begin{center}
\begin{tabular}{llcc}
  & component & bootstrap & confidence \\
  \hline
  $n = 100$  & $\text{vec}(\beta)_1$ & (-0.062 0.958) & (-0.092 0.997) \\
             & $\text{vec}(\beta)_2$ & (-0.873 0.330) & (-0.922 0.342) \\
             \hline
  $n = 500$  & $\text{vec}(\beta)_1$ & ( 0.279 0.826) & (0.256 0.823) \\
             & $\text{vec}(\beta)_2$ & ( 0.070 0.655) & (0.074 0.658) \\
             \hline
  $n = 1000$ & $\text{vec}(\beta)_1$ & ( 0.415 0.771) & ( 0.410 0.768) \\
             & $\text{vec}(\beta)_2$ & (-0.010 0.364) & (-0.020 0.350) \\
             \hline
  $n = 5000$ & $\text{vec}(\beta)_1$ & ( 0.509 0.684) & ( 0.509 0.684) \\
             & $\text{vec}(\beta)_2$ & (-0.031 0.143) & (-0.030 0.143)
\end{tabular}
\end{center}
\caption{ Comparison of the 95\% percentile interval and a 95\% confidence 
  interval for the first two components of $\text{vec}(\beta)$. The number 
  of bootstrap samples is $B = 4n$ for each dataset. }
\label{boot-ex1-tab1}
\end{table}

\subsubsection{Random design and heteroskedasticity}

This example aims to show support for Theorem~\ref{MVboot-MVLR-randomX}. We 
generated data according to the multivariate linear regression 
model~\eqref{MVmodel} where $Y_i \in \R^3$, $X_i \in \R^2$, and both $\beta$ 
and $\Sigma$ are prespecified. The predictors and errors are generated 
according to
$$
  \left(\begin{array}{c}
    X_i \\
    \varepsilon_i
  \end{array}\right) \sim
  N\left\{\left(\begin{array}{c}
    0 \\
    0
  \end{array}\right),\left(\begin{array}{cc}
    \Sigma_X & \Sigma_{X \varepsilon} \\
    \Sigma_{\varepsilon X} & \Sigma
  \end{array}\right)
  \right\},
$$
for $i = 1,...,n$. Our goal is to make inference about $\text{vec}(\beta)$ 
using the multivariate bootstrap procedure in the random design case. 
For each component of $\beta$, a 95\% percentile interval 
computed using the residual bootstrap in Algorithm 2 is compared with a 95\% 
confidence interval that assumes model \eqref{MVmodel} with heterogeneity is 
correct. Three data sets were generated at different sample sizes and the 
performance of the multivariate bootstrap is assessed. The bootstrap is 
performed a total of $B = 4n$ times in each dataset. The results are 
displayed in Table~\ref{boot-ex2-tab1}. For the first two components of 
$\beta$, we see that the confidence regions obtained from both methods are 
close to each other and that the distance between the two shrinks as $n$ 
increases. Similar results are obtained for the other components of $\beta$.


\begin{table}[h!]
\begin{center}
\begin{tabular}{llcc}
   & component & bootstrap & confidence \\
   \hline
   $n = 100$  & $\text{vec}(\beta)_1$ & (-0.013 1.617) & (0.205 1.391) \\
              & $\text{vec}(\beta)_2$ & ( 0.232 1.438) & (0.296 1.366) \\
              \hline
   $n = 500$  & $\text{vec}(\beta)_1$ & ( 0.638 1.208) & (0.646 1.198) \\
              & $\text{vec}(\beta)_2$ & ( 0.329 0.912) & (0.369 0.868) \\
              \hline
   $n = 1000$ & $\text{vec}(\beta)_1$ & ( 0.937 1.323) & (0.952 1.304) \\
              & $\text{vec}(\beta)_2$ & ( 0.646 0.987) & (0.659 0.982) \\
              \hline
   $n = 5000$ & $\text{vec}(\beta)_1$ & ( 0.995 1.161) & (0.997 1.160) \\
              & $\text{vec}(\beta)_2$ & ( 0.608 0.771) & (0.616 0.764)
  \end{tabular}
\end{center}
\caption{ Comparison of the 95\% percentile interval and a 95\% confidence 
  interval for the first two components of $\text{vec}(\beta)$. The number 
  of bootstrap samples is $B = 4n$ for each dataset. }
\label{boot-ex2-tab1}
\end{table}

\subsection{Cars data}


The data in this example, analyzed in \cite{henderson}, was extracted from the 
1974 Motor Trend US magazine. The objective of this study is to compare 
aspects of automobile design on performance and fuel composition for 32 
automobiles (1973-74) models. In this analysis, we assume that the multivariate 
model~\eqref{MVmodel} with miles per gallon, displacement, and horse power as 
response variables and number of cylinders and transmission type are predictors. 
Number of cylinders and transmission type are both factor variables. The 
automobiles have either 4, 6, or 8 cylinders and their transmission type is 
either automatic or manual.

For inference for $\beta$, we compare a 95\% bootstrap percentile region using 
the fixed design bootstrap in Algorithm 1 with a 95\% confidence 
interval. The number of bootstrap resamples is set at $B = 4n$. The results 
are depicted in Table~\ref{cars-data-tab1}. We see that inferences about 
$\beta$ are fairly similar for both methods.

\begin{table}[h!]
\begin{center}
\begin{tabular}{lll}
  component & bootstrap & confidence \\
  \hline
$\text{vec}(\beta)_1$ & (   2.734   7.027) & (   2.286   7.136) \\
$\text{vec}(\beta)_2$ & (  -3.693   0.630) & (  -3.806   0.916) \\
$\text{vec}(\beta)_3$ & (  -6.823  -4.173) & (  -6.900  -3.812) \\
$\text{vec}(\beta)_4$ & (   0.326   5.745) & (   0.181   4.939) \\
$\text{vec}(\beta)_5$ & (-134.667 -52.921) & (-134.408 -56.787)
\end{tabular}
\end{center}
\caption{ Comparison of the 95\% percentile interval and a 95\% confidence 
  interval for the first five components of $\text{vec}(\beta)$. }
\label{cars-data-tab1}
\end{table}

\section{Theoretical details}

Before we present our proof of Theorems~\ref{MVboot-MVLR} 
and~\ref{MVboot-MVLR-randomX}, we motivate the Mallows metric as a central 
tool for our proof technique. 
The Mallows metric for probabilities in $\R^p$, relative to the Euclidean norm 
was the driving force needed to establish the validity of the residual 
bootstrap approximation in the context of 
univariate regression \citep*{bickel, freedman}. The Mallows metric, relative 
to the Euclidean norm, for two probability measures $\mu,\nu$ in $\R^p$, 
denoted $d_l^p(\mu,\nu)$, is 
$$
  d_l^p(\mu,\nu) = \inf_{U \sim \mu,V \sim \nu} E^{1/l}\left(\|U-V\|^l\right).
$$ 
Properties of the Mallows metric are developed for random variables on 
separable Banach spaces of finite dimension \citep*{bickel}. Since $\R^k$ 
is indeed a separable Banach space for a natural number $k$, the theory in 
\cite{bickel} applies to our case.
In the present article, we use the Mallows metric when $r > 1$ to prove that 
the residual bootstrap can be used to estimate the variability of 
$\text{vec}(\hat{\beta})$ consistently.

\subsection{Fixed design}

Let $\Psi_n(F)$ be the distribution function of 
$\surd{n}\left\{\text{vec}(\hat{\beta}) - \text{vec}(\beta)\right\}$ 
where $F$ is the law of the errors $\varepsilon$ so that $\Psi_n(F)$ is a 
probability measure on $\R^{rp}$. Let $G$ be an alternate law of the errors, 
where it is assumed that $G$ is mean-zero with finite variance $\Sigma_G > 0$. 
In applications, $G$ will be the centered empirical distribution of the 
residuals.

\begin{thm} \label{identity} 
$\left[d_2^{rp}\left\{\Psi_n(F),\Psi_n(G)\right\}\right]^2 
  \leq n r\tr\left\{(\X^T\X)^{-1}\right\}\left\{d_2^r(F,G)\right\}^2$.
\end{thm}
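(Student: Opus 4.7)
The plan is to set up a coupling on $\R^r \times \R^r$ that achieves the Mallows distance $d_2^r(F,G)$, lift it to $n$ iid copies on $\R^{nr}$, and translate the resulting bound to $\R^{rp}$ by exploiting the linearity of OLS in the errors. This is exactly the pattern used in \cite{bickel, freedman} for the scalar-response case; the multivariate wrinkle is only bookkeeping via the $\text{vec}$ and Kronecker operations.

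First I would invoke attainment of the Mallows infimum: since $F$ and $G$ have finite second moments on the Polish space $\R^r$, there exist $U_0 \sim F$ and $V_0 \sim G$ on a common probability space with $E\|U_0 - V_0\|^2 = \{d_2^r(F,G)\}^2$, a fact that is standard in \cite{bickel}. I would then take $n$ independent copies $(\varepsilon_i, \tilde\varepsilon_i)$ of $(U_0, V_0)$, assemble the matrices $\varepsilon, \tilde\varepsilon$, and form $\hat\beta = \beta + \varepsilon^T\X(\X^T\X)^{-1}$ and $\tilde{\hat\beta} = \beta + \tilde\varepsilon^T\X(\X^T\X)^{-1}$. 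Then $\sqrt{n}\{\text{vec}(\hat\beta) - \text{vec}(\beta)\}$ and $\sqrt{n}\{\text{vec}(\tilde{\hat\beta}) - \text{vec}(\beta)\}$ form a coupling on $\R^{rp} \times \R^{rp}$ of $\Psi_n(F)$ and $\Psi_n(G)$, so by definition of the Mallows metric
\begin{equation*}
\left[d_2^{rp}\{\Psi_n(F),\Psi_n(G)\}\right]^2 \leq n\, E\|\text{vec}(\hat\beta - \tilde{\hat\beta})\|^2.
\end{equation*}

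Next I would bound the right-hand side. Setting $\delta_i = \varepsilon_i - \tilde\varepsilon_i$ and $C = \X(\X^T\X)^{-1}$ with rows $C_{i\cdot}$, one has $\hat\beta - \tilde{\hat\beta} = \sum_{i=1}^n \delta_i C_{i\cdot}$, and the outer-product identity $\text{vec}(uv^T) = v \otimes u$ gives
\begin{equation*}
\text{vec}(\hat\beta - \tilde{\hat\beta}) = \sum_{i=1}^n C_{i\cdot}^T \otimes \delta_i,
\end{equation*}
a sum of $n$ independent mean-zero vectors in $\R^{rp}$. Pythagoras together with $\|a \otimes b\|^2 = \|a\|^2\|b\|^2$ then yields
\begin{equation*}
E\|\text{vec}(\hat\beta - \tilde{\hat\beta})\|^2 = \sum_{i=1}^n \|C_{i\cdot}\|^2 E\|\delta_i\|^2 = \{d_2^r(F,G)\}^2 \|C\|_F^2 = \{d_2^r(F,G)\}^2\, \tr\{(\X^T\X)^{-1}\},
\end{equation*}
where the final equality uses $\|C\|_F^2 = \tr(C^TC) = \tr\{(\X^T\X)^{-1}\}$. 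Combining with the previous display yields $[d_2^{rp}\{\Psi_n(F),\Psi_n(G)\}]^2 \leq n\, \tr\{(\X^T\X)^{-1}\}\{d_2^r(F,G)\}^2$, which is sharper by a factor of $r$ than the stated inequality; the theorem as stated follows a fortiori.

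The main obstacle is more conceptual than computational: one must verify that the optimal coupling of $F$ and $G$ on $\R^r$, together with the iid product structure across $i$, produces an admissible coupling of $\Psi_n(F)$ and $\Psi_n(G)$ on $\R^{rp}$ once composed with the linear map $\varepsilon \mapsto \sqrt{n}\,\text{vec}(\varepsilon^T\X(\X^T\X)^{-1})$. This is precisely what the \cite{bickel} formulation of the Mallows metric on separable Banach spaces secures. After that step the remainder is Pythagoras for independent mean-zero summands and a single trace identity, both of which are straightforward.
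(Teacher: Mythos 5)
Your proof is correct, and it takes a more hands-on route than the paper. The paper writes $\sqrt{n}\{\mathrm{vec}(\hat\beta)-\mathrm{vec}(\beta)\}$ as the single linear map $(A^T\otimes I_r)$, with $A=\X(\X^T\X)^{-1}$, applied to $\mathrm{vec}\{\varepsilon_n^T\}$, and then invokes Lemma 8.9 of Bickel and Freedman wholesale, which yields the constant $n\tr\{(A^TA)\otimes I_r\}=nr\tr\{(\X^T\X)^{-1}\}$. You instead build the optimal coupling of $F$ and $G$ on $\R^r$, take an iid product coupling across $i$, and compute the second moment of the induced coupling of $\Psi_n(F)$ and $\Psi_n(G)$ exactly via Pythagoras; because the $i$-th block map $C_{i\cdot}^T\otimes I_r$ satisfies $(C_{i\cdot}^T\otimes I_r)^T(C_{i\cdot}^T\otimes I_r)=\|C_{i\cdot}\|^2 I_r$, the exact value is $\|C_{i\cdot}\|^2 E\|\delta_i\|^2$ rather than the cruder trace bound $r\|C_{i\cdot}\|^2E\|\delta_i\|^2$, and you end up with $n\tr\{(\X^T\X)^{-1}\}\{d_2^r(F,G)\}^2$ --- sharper than the stated bound by a factor of $r$, so the theorem follows a fortiori (and nothing downstream is affected, since only the rate in $n$ matters for the consistency arguments). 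Two small points you are implicitly using and should flag: the vanishing of the cross terms in your Pythagoras step requires $E\delta_i=0$, which holds because the paper assumes both $F$ and $G$ (in applications, the centered empirical law $\widehat F_n$) are mean zero --- with unequal means the cross terms $\sum_{i\neq j}\langle C_{i\cdot},C_{j\cdot}\rangle\|E\delta\|^2$ would not drop out; and the attainment of the infimum defining $d_2^r(F,G)$ is Lemma 8.1 of Bickel and Freedman (though even near-optimal couplings would suffice). In short: same skeleton as the paper (couple the error laws, push through the linearity of OLS), but your explicit block decomposition replaces the citation of Lemma 8.9 and buys a cleaner constant.
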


\begin{proof}
Let $A = \X(\X^T\X)^{-1}$. Then $\Psi_n(F)$ is the law of 
$\surd{n}\varepsilon_n^T(F)A$ where $\varepsilon_n(F)$ is the matrix with 
$n$ rows of independent random variables $\varepsilon$, having common law $F$. 
$\Psi_n(G)$ can be thought of similarly. Observe that $A^TA = (\X^T\X)^{-1}$. 
Then, from Lemma 8.9 in \cite{bickel}, we see that
\begin{align*}
  &\left[d_2^{rp}\left\{\Psi_n(F),\Psi_n(G)\right\}\right]^2 
    = \left(d_2^{rp}\left[\text{vec}\{\varepsilon_n^T(F)A\},\text{vec}\{\varepsilon_n^T(G)A\}\right]\right)^2 \\
  &\qquad= \left(d_2^{rp}\left[ (A^T \otimes I_r)\text{vec}\{\varepsilon_n^T(F)\}, (A^T \otimes I_r)\text{vec}\{\varepsilon_n^T(G)\} \right]\right)^2 \\
  &\qquad\leq n \tr\left\{ (A^T\otimes I_r)(A^T\otimes I_r)^T \right\} \left\{d_2^r(F,G)\right\}^2 
    = n \tr\left\{ (A^T\otimes I_r)(A\otimes I_r) \right\} \left\{d_2^r(F,G)\right\}^2 \\
  &\qquad= n \tr\left( A^TA \otimes I_r \right) \left\{d_2^r(F,G)\right\}^2 
    = n \tr\left\{ (\X^T\X)^{-1} \otimes I_r \right\} \left\{d_2^r(F,G)\right\}^2 \\
  &\qquad= n r \tr\left\{ (\X^T\X)^{-1} \right\} \left\{d_2^r(F,G)\right\}^2,
\end{align*}
which is our desired conclusion. 
\end{proof}

With Theorem~\ref{identity} we can bound the distance between the sample 
dependent distribution functions $\Psi_n(F)$ and $\Psi_n(G)$ by the distance 
between their underlying laws. As in \cite{freedman}, we proceed with $F_n$ 
as the empirical distribution function of $\varepsilon_1,...,\varepsilon_n$. 
Let $\widetilde{F}_n$ be the empirical distribution of the residuals 
$\widehat{\varepsilon}_1,...,\widehat{\varepsilon}_n$ from the original 
regression, and let $\widehat{F}_n$ be $\widetilde{F}_n$ centered at its mean 
$\hat{\mu} = n^{-1}\sum_{i=1}^n\widehat{\varepsilon}_i$. Since 
$\widehat{\varepsilon} = \mathbb{Y} - \X\hat{\beta}^T$, we have
$
  \widehat{\varepsilon} - \varepsilon = -\Proj\varepsilon
$  
where $\Proj$ is the projection into the column space of $\X$.

\begin{lem} \label{lem1}
$E^2\left\{d_2^r(\widetilde{F}_n,F_n)\right\} \leq p\tr(\Sigma)/n$.
\end{lem}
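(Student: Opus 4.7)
The plan is to use the standard index-coupling bound for the Mallows metric between empirical distributions, and then reduce the bound to a trace computation involving the projection matrix $\Proj$.

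First, I would observe that by the definition of the Mallows metric, any coupling of $F_n$ and $\widetilde{F}_n$ yields an upper bound. The natural coupling is the random-index coupling: draw an index $I$ uniformly from $\{1,\ldots,n\}$, and pair $(\varepsilon_I, \widehat{\varepsilon}_I)$. The marginals are $F_n$ and $\widetilde{F}_n$, so
\[
\left\{d_2^r(\widetilde{F}_n, F_n)\right\}^2 \le \frac{1}{n}\sum_{i=1}^n \|\widehat{\varepsilon}_i - \varepsilon_i\|^2 = \frac{1}{n}\,\tr\!\left\{(\widehat{\varepsilon}-\varepsilon)^T(\widehat{\varepsilon}-\varepsilon)\right\}.
\]
Using the identity $\widehat{\varepsilon} - \varepsilon = -\Proj \varepsilon$ stated just above the lemma, and the fact that $\Proj$ is symmetric and idempotent, this reduces to
\[
\left\{d_2^r(\widetilde{F}_n, F_n)\right\}^2 \le \frac{1}{n}\,\tr(\varepsilon^T \Proj \varepsilon).
\]

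The second step is to take expectations. Writing this out componentwise, the $(j,l)$ entry of $E(\varepsilon^T \Proj \varepsilon)$ equals $\sum_{i,k}\Proj_{ik} E(\varepsilon_{ij}\varepsilon_{kl})$. Since the rows of $\varepsilon$ are independent with common covariance $\Sigma$, only $i = k$ contributes, and one obtains $\Sigma_{jl}\sum_i \Proj_{ii} = \tr(\Proj)\,\Sigma_{jl}$. Because $\Proj$ projects onto the $p$-dimensional column space of $\X$, $\tr(\Proj) = p$, so taking the trace gives $E\{\tr(\varepsilon^T \Proj \varepsilon)\} = p\,\tr(\Sigma)$. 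Thus
\[
E\!\left[\left\{d_2^r(\widetilde{F}_n, F_n)\right\}^2\right] \le \frac{p\,\tr(\Sigma)}{n}.
\]

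Finally, Jensen's inequality applied to $x \mapsto x^2$ gives $\left\{E\,d_2^r(\widetilde{F}_n,F_n)\right\}^2 \le E\{d_2^r(\widetilde{F}_n, F_n)\}^2$, which yields the stated bound. There is no real obstacle here; the only subtle point is recognizing that the random-index coupling is a valid coupling between the two empirical measures (with marginals equal to the empirical laws), which justifies the first inequality. Everything else is bookkeeping with the projection trace and second-moment structure of $\varepsilon$.
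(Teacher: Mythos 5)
Your proof is correct and follows essentially the same route as the paper: the random-index coupling bound $\{d_2^r(\widetilde{F}_n,F_n)\}^2 \le n^{-1}\tr(\varepsilon^T\Proj\varepsilon)$ followed by the expectation computation giving $p\tr(\Sigma)$. Your componentwise evaluation even yields the exact equality $E\{\tr(\varepsilon^T\Proj\varepsilon)\} = p\tr(\Sigma)$ where the paper settles for the cruder bound $\tr(\Proj)\tr(\Sigma)$, and you make the final Jensen step explicit where the paper leaves it implicit.
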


\begin{proof}
From the definition of the Mallows metric we have
\begin{align*}
\left\{d_2^r(\widetilde{F}_n,F_n)\right\}^2 &\leq n^{-1}\sum_{i=1}^n \|\widehat{\varepsilon}_i - \varepsilon_i\|^2 
  = n^{-1} \tr\left\{ \left(\widehat{\varepsilon} - \varepsilon\right)^T\left(\widehat{\varepsilon} - \varepsilon\right)\right\} \\
  &= n^{-1} \tr\left( \varepsilon^T \Proj \varepsilon \right).
\end{align*}
From linearity of the expectation with respect to the trace operator, 
$$
E \left\{\tr\left( \varepsilon^T \Proj \varepsilon\right)\right\} 
  = \tr\left\{ E \left(\varepsilon^T \Proj \varepsilon\right)\right\} 
  = \tr\left\{ \Proj E \left(\varepsilon \varepsilon^T\right) \right\} 
  \leq \tr\left( \Proj \right) \tr\left( \Sigma \right) 
  = p\tr\left( \Sigma \right)
$$
and this completes the proof. 
\end{proof}

\begin{lem} \label{lem2}
$E^2\left\{d_2^r(\widehat{F}_n,F_n)\right\} \leq (p + 1)\tr(\Sigma)/n$.
\end{lem}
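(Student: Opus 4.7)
My plan is to mimic the coupling argument of Lemma~\ref{lem1} but now with the \emph{centered} residuals. One could start with the triangle inequality $d_2^r(\widehat{F}_n, F_n) \le d_2^r(\widehat{F}_n, \widetilde{F}_n) + d_2^r(\widetilde{F}_n, F_n)$ combined with Minkowski's inequality, but this yields the constant $(1+\sqrt{p})^2$, which is strictly weaker than $p+1$; a direct coupling gives the sharp bound.

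Pairing $\widehat{\varepsilon}_i - \hat\mu$ with $\varepsilon_i$ for each $i$ is a valid coupling between $\widehat{F}_n$ and $F_n$, so
$$
\{d_2^r(\widehat{F}_n, F_n)\}^2 \le n^{-1}\sum_{i=1}^n \|(\widehat{\varepsilon}_i - \hat\mu) - \varepsilon_i\|^2.
$$
Since $\widehat{\varepsilon} - \varepsilon = -\Proj\varepsilon$ in matrix form, the right-hand side equals $n^{-1}\|\Proj\varepsilon + \mathbf{1}_n\hat\mu^T\|_F^2$, where $\mathbf{1}_n \in \R^n$ is the all-ones vector and $\|\cdot\|_F$ is the Frobenius norm.

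I would then expand this Frobenius norm and take expectations term by term:
$$
E\|\Proj\varepsilon + \mathbf{1}_n\hat\mu^T\|_F^2 = E\tr(\varepsilon^T\Proj\varepsilon) + 2E\tr(\varepsilon^T\Proj\mathbf{1}_n\hat\mu^T) + nE\|\hat\mu\|^2.
$$
The first summand equals $p\tr(\Sigma)$, exactly as in Lemma~\ref{lem1}. For the third, substituting $\hat\mu = n^{-1}\varepsilon^T(I-\Proj)\mathbf{1}_n$ and using $E(\varepsilon\varepsilon^T) = \tr(\Sigma)\,I_n$ yields $n^{-1}\tr(\Sigma)\,\mathbf{1}_n^T(I-\Proj)\mathbf{1}_n \le \tr(\Sigma)$, since $I-\Proj$ has operator norm at most one.

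The key step, and the main thing that makes the sharp constant go through, is that the cross term vanishes. Substituting $\hat\mu^T = n^{-1}\mathbf{1}_n^T(I-\Proj)\varepsilon$ and applying cyclicity of the trace reduces the expectation of the cross term to $\tr(\Sigma)\,\mathbf{1}_n^T(I-\Proj)\Proj\mathbf{1}_n$, which is zero by idempotency $(I-\Proj)\Proj = \Proj - \Proj^2 = 0$. Adding the three contributions and dividing by $n$ yields the claimed bound $(p+1)\tr(\Sigma)/n$.
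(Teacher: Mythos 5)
Your proof is correct, and it reaches the sharp constant $p+1$ by a genuinely different route than the paper. The paper's proof invokes Lemma 8.8 of \cite{bickel} (the exact decomposition $d_2(U,V)^2 = d_2(U-EU,\,V-EV)^2 + \|EU-EV\|^2$) to peel off the centering: this reduces the claim to Lemma~\ref{lem1} plus the single computation $E\|n^{-1}\sum_i \varepsilon_i\|^2 = \tr(\Sigma)/n$, after discarding the nonpositive term $-\|E(\widetilde F_n)-E(F_n)\|^2$. You instead work with one explicit index coupling of $\widehat F_n$ and $F_n$, expand the Frobenius norm of $\Proj\varepsilon + \mathbf{1}_n\hat\mu^T$, and kill the cross term by hand using $(I-\Proj)\Proj = 0$; the orthogonality you verify is exactly the geometric content that Lemma 8.8 packages abstractly. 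The paper's version is shorter and reuses Lemma~\ref{lem1} as a black box; yours is self-contained (no appeal to Lemma 8.8), and it exposes that the ``$+1$'' contribution is really $n^{-2}\tr(\Sigma)\,\mathbf{1}_n^T(I-\Proj)\mathbf{1}_n$, which is not merely bounded by $\tr(\Sigma)/n$ but vanishes outright whenever the design contains an intercept column. Two trivial points: your displayed expression for the expected cross term drops a factor of $n^{-1}$ (immaterial, since the quantity is zero), and your opening remark about the triangle-inequality route giving only $(1+\sqrt p)^2$ correctly identifies why a direct coupling or the Lemma 8.8 identity is needed rather than Minkowski.
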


\begin{proof}
From Lemma 8.8 in \cite{bickel} we have
\begin{align*}
d_2^r(\widehat{F}_n,F_n)^2 &= d_2^r\{\widetilde{F}_n - E(\widetilde{F}_n), F_n - E(F_n)\}^2 + \|E(F_n)\|^2 \\
  &= d_2^r(\widetilde{F}_n,F_n)^2 - \| E(\widetilde{F}_n) - E(F_n) \|^2 + \|E(F_n)\|^2 \\
  &\leq d_2^r(\widetilde{F}_n,F_n)^2 + \|n^{-1}\sum_{i=1}^n \varepsilon_i \|^2
\end{align*}
with the empirical distribution functions $F_n$,$\widetilde{F}_n$, and 
$\widehat{F}_n$ used as random variables in the application of Lemma 8.8 
in \cite{bickel}. We see that 
\begin{align*}
  E \left(\|n^{-1}\sum_{i=1}^n \varepsilon_i \|^2\right) 
    = n^{-2}\left\{E\left(\sum_{i=1}^n\varepsilon_i^T\varepsilon_i 
      + \sum_{i\neq j}\varepsilon_i^T\varepsilon_j\right)\right\} 
    = n^{-1}\left\{E (\varepsilon_1^T\varepsilon_1) \right\} 
    = n^{-1} \tr\left(\Sigma\right).
\end{align*}
Our conclusion follows from Lemma~\ref{lem1}.
\end{proof}

These results imply the validity of the bootstrap approximation 
for the model \eqref{MVmodel} if we assume that 
$n^{-1}\X^T\X \to \Sigma_{X} > 0$. From Theorem~\ref{identity},
$$  
  E\left[d_2^{rp}\{\Psi_n(\widehat{F}_n),\Psi_n(F)\}\right] 
    \leq  n r\tr\left\{(\X^T\X)^{-1}\right\}d_2^r(\widehat{F}_n,F)
$$  
and because of the metric properties of $d_2^r(\cdot,\cdot)$ 
$$
  \frac{1}{2}d_2^r(\widehat{F}_n,F)^2 
    \leq d_2^r(\widehat{F}_n,F_n)^2 + d_2^r(F_n,F)^2
$$
where Lemma~\ref{lem2} shows that 
$d_2^r(\widehat{F}_n,F_n)^2 \to_p 0$ and Lemma 8.4 of 
\citep*{bickel} implies that $d_2^r(F_n,F)^2 \to_p 0$ with the 
separable Banach space taken to be $\R^r$. The next results are special cases 
of \cite{lai} which are adapted from \cite{freedman} to the multivariate 
setting. We let $\varepsilon_j$, $j=1,...,r$, be the column of $\varepsilon$ 
corresponding to the errors of response $Y_j$.

\begin{lem} \label{lem3}
$n^{-1}\X^T\varepsilon \to 0 \;$ a.s. and 
$\hat{\beta} \to \beta \;$ a.s.
\end{lem}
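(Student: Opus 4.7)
The plan is to prove the two statements in the order given, deriving the consistency of $\hat{\beta}$ from the first statement together with the hypothesis $n^{-1}\X^T\X \to \Sigma_X > 0$.

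First I would reduce $n^{-1}\X^T\varepsilon \to 0$ a.s.\ to an entry-wise claim. The $(j,k)$ entry of $\X^T\varepsilon$ is $S_n^{(j,k)} = \sum_{i=1}^n X_{ij}\varepsilon_{ik}$, a sum of independent mean-zero random variables (since the design is deterministic). By Kolmogorov's strong law of large numbers for independent summands, it suffices to verify
$$
  \sum_{i=1}^\infty \frac{\Var(X_{ij}\varepsilon_{ik})}{i^2}
  = \sigma_{kk}\sum_{i=1}^\infty \frac{X_{ij}^2}{i^2} < \infty,
$$
where $\sigma_{kk}$ is the $(k,k)$ entry of $\Sigma$, finite by the finite second (indeed fourth) moment assumption. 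So the whole question boils down to showing the deterministic series $\sum_i X_{ij}^2/i^2$ converges. Letting $b_n = \sum_{i=1}^n X_{ij}^2$, the hypothesis $n^{-1}\X^T\X \to \Sigma_X$ gives $b_n/n \to (\Sigma_X)_{jj}$, so $b_n = O(n)$. Abel summation then yields
$$
  \sum_{i=1}^N \frac{X_{ij}^2}{i^2}
  = \frac{b_N}{N^2} + \sum_{i=1}^{N-1} b_i\!\left(\frac{1}{i^2} - \frac{1}{(i+1)^2}\right)
  = O(N^{-1}) + O\!\left(\sum_{i=1}^{N-1} i^{-2}\right),
$$
which is bounded uniformly in $N$. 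Kolmogorov's SLLN therefore gives $n^{-1}S_n^{(j,k)} \to 0$ a.s., and since there are only finitely many pairs $(j,k)$ we conclude $n^{-1}\X^T\varepsilon \to 0$ a.s.

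Next I would deduce $\hat{\beta} \to \beta$ a.s. Substituting $\Y = \X\beta^T + \varepsilon$ into $\hat{\beta} = \Y^T\X(\X^T\X)^{-1}$ gives the identity
$$
  \hat{\beta} - \beta
  = \varepsilon^T \X (\X^T\X)^{-1}
  = \bigl(n^{-1}\varepsilon^T\X\bigr)\bigl(n^{-1}\X^T\X\bigr)^{-1}.
$$
The first factor tends to the zero matrix a.s.\ by the first part, and the second factor tends to $\Sigma_X^{-1}$ by the hypothesis together with continuity of matrix inversion at positive definite matrices (which is applicable since $\Sigma_X > 0$ guarantees $n^{-1}\X^T\X$ is eventually invertible with bounded inverse). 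A continuous-mapping argument then yields $\hat{\beta} - \beta \to 0$ a.s.

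The main obstacle is verifying the Kolmogorov summability condition $\sum_i X_{ij}^2/i^2 < \infty$ purely from the Cesàro-type hypothesis $n^{-1}\X^T\X \to \Sigma_X$, which as shown above is a short Abel-summation computation; everything else is assembling the pieces. No additional moment assumption beyond the finite second moment on the errors is needed for this lemma, although the paper's standing assumption of finite fourth moments makes all invocations routine.
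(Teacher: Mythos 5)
Your proof is correct, but it takes a more self-contained route than the paper. The paper's own proof is a two-line reduction: it writes $n^{-1}\X^T\varepsilon$ column by column and invokes Lemma 2.3 of Freedman (1981) — itself a special case of the Lai--Robbins--Wei strong laws for fixed designs — to get $n^{-1}\X^T A_j \to 0$ a.s.\ for each error column $A_j$, and then says ``a similar argument'' handles $\hat{\beta}\to\beta$. You instead reprove the scalar fact from scratch: reduce to the $(j,k)$ entry $\sum_i X_{ij}\varepsilon_{ik}$, note these are independent mean-zero summands because the design is deterministic, verify the Kolmogorov condition $\sum_i X_{ij}^2/i^2 < \infty$ by Abel summation from $b_n = \sum_{i\le n}X_{ij}^2 = O(n)$ (which is exactly what $n^{-1}\X^T\X\to\Sigma_X$ supplies), and conclude by Kolmogorov's SLLN; the computation $b_N/N^2 + \sum_{i<N} b_i\{i^{-2}-(i+1)^{-2}\} = O(N^{-1}) + O(\sum_i i^{-2})$ is right, and your explicit identity $\hat{\beta}-\beta = (n^{-1}\varepsilon^T\X)(n^{-1}\X^T\X)^{-1}$ makes the second claim an immediate consequence of the first, which is cleaner than the paper's ``similar argument.'' What the paper's approach buys is brevity and an explicit link to the known univariate result it is extending; what yours buys is a fully self-contained argument that makes transparent exactly which hypotheses are used (only independence, common second moments of the errors, and the Ces\`{a}ro condition on the design — no fourth moments), essentially reconstructing the content of Freedman's Lemma 2.3 rather than citing it.
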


\begin{proof}
Let $A_j$ be the $j$th column of $\varepsilon$. Then 
$n^{-1}\X^T\varepsilon \in \R^{p \times r}$ with columns 
$n^{-1}\X^T\varepsilon$. Lemma 2.3 of \cite{freedman} states that 
$n^{-1}\X^T A_j \to 0$ a.s. for any particular 
$j = 1,...,r$. Therefore $n^{-1}\X^T\varepsilon \to 0$ a.s.
A similar argument verifies our second result. 
\end{proof}

\begin{lem} \label{lem4}
$n^{-1}\tr\left\{ (\widehat{\varepsilon} - \varepsilon)^T(\widehat{\varepsilon} - \varepsilon) \right\} \to 0 \;$ a.s..
\end{lem}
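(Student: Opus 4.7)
The plan is to reduce the quantity to something Lemma~\ref{lem3} controls directly, using the geometric fact recorded just before Lemma~\ref{lem1}, namely $\widehat{\varepsilon} - \varepsilon = -\Proj \varepsilon$ with $\Proj = \X(\X^T\X)^{-1}\X^T$.

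First I would substitute this identity and exploit that $\Proj$ is symmetric and idempotent to write
$$
  \tr\left\{(\widehat{\varepsilon} - \varepsilon)^T(\widehat{\varepsilon} - \varepsilon)\right\}
    = \tr\left(\varepsilon^T \Proj \varepsilon\right)
    = \tr\left\{(\X^T\X)^{-1}(\X^T\varepsilon)(\X^T\varepsilon)^T\right\},
$$
using the cyclic property of the trace in the last step. Here $(\X^T\varepsilon) \in \R^{p\times r}$, so the expression inside the trace is a $p\times p$ matrix built from matrices of fixed dimensions (not depending on $n$), which is the key to controlling it.

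Next I would rescale. Factoring out the appropriate powers of $n$ and dividing both sides by $n$ yields
$$
  n^{-1}\tr\left\{(\widehat{\varepsilon} - \varepsilon)^T(\widehat{\varepsilon} - \varepsilon)\right\}
    = \tr\left\{\left(n^{-1}\X^T\X\right)^{-1}
      \left(n^{-1}\X^T\varepsilon\right)\left(n^{-1}\X^T\varepsilon\right)^T\right\}.
$$
By the assumption $n^{-1}\X^T\X \to \Sigma_X > 0$, the matrix $(n^{-1}\X^T\X)^{-1}$ converges to the bounded limit $\Sigma_X^{-1}$ and is therefore eventually bounded in operator norm. By Lemma~\ref{lem3}, $n^{-1}\X^T\varepsilon \to 0$ almost surely, hence $(n^{-1}\X^T\varepsilon)(n^{-1}\X^T\varepsilon)^T \to 0$ a.s. entry-wise. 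Continuity of the trace applied to a product of a bounded matrix and a matrix tending to zero then gives the desired almost-sure convergence to $0$.

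I do not anticipate a real obstacle: the argument is essentially an algebraic rearrangement followed by a direct application of Lemma~\ref{lem3} and the hypothesis on $n^{-1}\X^T\X$. The only mild care needed is to distribute the $n^{-1}$ factor correctly (two powers of $n$ absorbed into the two $\X^T\varepsilon$ terms, and one power of $n^{-1}$ extracted from $(\X^T\X)^{-1}$), so that everything is expressed in terms of objects whose limits are already available.
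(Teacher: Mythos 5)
Your proposal is correct and takes essentially the same route as the paper: both use the identity $\widehat{\varepsilon} - \varepsilon = -\Proj\varepsilon$ to rewrite the quantity as $\tr\left\{\left(n^{-1}\varepsilon^T\X\right)\left(n^{-1}\X^T\X\right)^{-1}\left(n^{-1}\X^T\varepsilon\right)\right\}$ (your expression is just a cyclic permutation of this) and then conclude from Lemma~\ref{lem3} together with the assumption $n^{-1}\X^T\X \to \Sigma_X > 0$.
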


\begin{proof}
A similar argument to that of Lemma 2.4 in \cite{freedman} gives
\begin{align*}
n^{-1}\tr\left\{ (\widehat{\varepsilon} - \varepsilon)^T(\widehat{\varepsilon} - \varepsilon) \right\} 
  &= n^{-1}\tr\left\{\varepsilon^T\X(\X^T\X)^{-1}\X^T\varepsilon \right\} \\
  &= \tr\left\{ \left(n^{-1}\varepsilon^T\X\right)\left(n^{-1}\X^T\X\right)^{-1}\left(n^{-1}\X^T\varepsilon\right) \right\}.
\end{align*}
The center term converges to $\Sigma_{X} > 0$ and the left and right terms converge to 0 a.s. by Lemma~\ref{lem3}. Our result follows.
\end{proof}

\begin{lem} \label{lem5}
$d_2^r(\widehat{F}_n,F_n) \to 0 \;$ a.s. and $d_2^r(\widehat{F}_n,F) \to 0 \;$ a.s.
\end{lem}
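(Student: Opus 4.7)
The plan is to prove the two almost-sure limits separately, reusing essentially the same decomposition that appeared in the proof of Lemma~\ref{lem2} but upgrading the $L^2$ bounds there to almost-sure convergence using the results collected in Lemmas~\ref{lem3} and~\ref{lem4}. The key ingredient is the coupling definition of the Mallows metric, which gives the row-wise bound
$$
  d_2^r(\widetilde{F}_n,F_n)^2 \leq n^{-1}\sum_{i=1}^n \|\widehat{\varepsilon}_i - \varepsilon_i\|^2 = n^{-1}\tr\{(\widehat{\varepsilon} - \varepsilon)^T(\widehat{\varepsilon} - \varepsilon)\},
$$
together with Lemma~8.8 of \cite{bickel} (the Pythagorean-type identity relating two probability measures to their mean-centered versions).

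First, for $d_2^r(\widehat{F}_n,F_n) \to 0$ a.s., I would repeat verbatim the decomposition used inside the proof of Lemma~\ref{lem2} to obtain
$$
  d_2^r(\widehat{F}_n,F_n)^2 \leq d_2^r(\widetilde{F}_n,F_n)^2 + \Big\|n^{-1}\sum_{i=1}^n\varepsilon_i\Big\|^2.
$$
The first term on the right is at most $n^{-1}\tr\{(\widehat{\varepsilon}-\varepsilon)^T(\widehat{\varepsilon}-\varepsilon)\}$ by the display above, and this vanishes almost surely by Lemma~\ref{lem4}. The second term converges to zero almost surely by the strong law of large numbers applied to the i.i.d.\ mean-zero errors $\varepsilon_i$ (finite fourth moments imply finite first moments, which is all that SLLN requires).

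Second, for $d_2^r(\widehat{F}_n, F) \to 0$ a.s., I would apply the triangle inequality
$$
  d_2^r(\widehat{F}_n, F) \leq d_2^r(\widehat{F}_n, F_n) + d_2^r(F_n, F).
$$
The first term on the right has just been shown to vanish almost surely. The second is handled by Lemma~8.4 of \cite{bickel}: since the $\varepsilon_i$ are i.i.d.\ from $F$ on the separable Banach space $\R^r$ and have finite second moments (in fact finite fourth moments by hypothesis), their empirical distribution $F_n$ satisfies $d_2^r(F_n, F) \to 0$ a.s.

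I do not expect any serious obstacle here; the proof is essentially bookkeeping once the framework of Lemmas~\ref{lem3} and~\ref{lem4} is in place. The only point requiring care is keeping straight the three empirical laws $F_n$, $\widetilde{F}_n$, $\widehat{F}_n$ and applying Lemma~8.8 of \cite{bickel} in the correct direction, using that $\widehat{F}_n$ has mean zero by construction while $\widetilde{F}_n$ has mean $\hat{\mu}$ and $F_n$ has mean $n^{-1}\sum_i\varepsilon_i$.
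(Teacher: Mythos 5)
Your proposal is correct and follows essentially the same route as the paper: the Lemma~\ref{lem2} decomposition via Lemma~8.8 of \cite{bickel}, the almost-sure vanishing of the trace term via Lemma~\ref{lem4} and of $\|n^{-1}\sum_i \varepsilon_i\|^2$ via the strong law, and then the triangle inequality together with Lemma~8.4 of \cite{bickel} for the second limit. If anything, you are slightly more careful than the paper, which folds the SLLN step silently into its appeal to Lemma~\ref{lem4}.
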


\begin{proof}
From the arguments in the proofs of Lemmas~\ref{lem1} and \ref{lem2} we have that 
\begin{align*}
d_2^r(\widehat{F}_n,F_n) &=  d_2^r(\widetilde{F}_n,F_n)^2 - \| E(\widetilde{F}_n) - E(F_n) \|^2 + \|E(F_n)\|^2 \\
  &= \|n^{-1}\sum_{i=1}^n\varepsilon_i\|^2 - \|n^{-1}\sum_{i=1}^n\left(\widehat{\varepsilon}_i - \varepsilon_i\right) \|^2 + d_2^r(\widetilde{F}_n,F_n) \\
  &\leq \|n^{-1}\sum_{i=1}^n\varepsilon_i\|^2 + n^{-1}\tr\left\{ (\widehat{\varepsilon} - \varepsilon)^T(\widehat{\varepsilon} - \varepsilon) \right\}
\end{align*}
which converges to 0 a.s. by Lemma~\ref{lem4}. Therefore the first convergence result holds. From the metric properties of the Mallows metric we have that 
$$
  \frac{1}{2}d_2^r(\widehat{F}_n,F)^2 \leq d_2^r(\widehat{F}_n,F_n)^2 + d_2^r(F_n,F)^2. 
$$
Our second convergence result follows from the first convergence result and Lemma 8.4 of \cite{bickel}.
\end{proof}

\begin{lem} \label{lem6}
Let $u_i$ and $v_i$, $i = 1,...,n$, be $r \times 1$ vectors. Let
$$
  \bar{u} = n^{-1}\sum_{i=1}^n u_i, \quad \text{and} \quad s^2_u = n^{-1}\sum_{i=1}^n(u_i - \bar{u})(u_i - \bar{u})^T  
$$  
and similarly for $v$. Then 
$$ 
  \|s_u^2 - s_v^2\|_F^2 \leq \|n^{-1}\sum_{i=1}^n(u_i - v_i)(u_i - v_i)^T\|_F^2 
$$  
where $\|\cdot\|_F$ is the Frobenius norm.
\end{lem}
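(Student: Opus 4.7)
The plan is to reduce the claim to a controllable algebraic decomposition of the difference $s_u^2 - s_v^2$ in terms of the centered perturbations $\tilde w_i = (u_i - v_i) - (\bar u - \bar v)$, and then bound the resulting terms via standard matrix inequalities. Writing $u_i - \bar u = (v_i - \bar v) + \tilde w_i$ and applying the elementary identity $AA^T - BB^T = (A - B)B^T + A(A - B)^T$ with $A = u_i - \bar u$ and $B = v_i - \bar v$, then averaging over $i$, I obtain
\[
s_u^2 - s_v^2 = n^{-1}\sum_{i=1}^n \tilde w_i (v_i - \bar v)^T + n^{-1}\sum_{i=1}^n (u_i - \bar u) \tilde w_i^T.
\]
In parallel, the target upper bound admits the companion decomposition $n^{-1}\sum_{i=1}^n w_i w_i^T = n^{-1}\sum_{i=1}^n \tilde w_i \tilde w_i^T + \bar w \bar w^T$ into two positive semidefinite pieces.

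Next, I would apply the Cauchy--Schwarz inequality for the Hilbert--Schmidt inner product, $\|XY^T\|_F \leq \|X\|_F \|Y\|_{\mathrm{op}}$, to bound each cross summand by quantities involving the trace of $n^{-1}\sum_{i=1}^n \tilde w_i \tilde w_i^T$ and the spread of the $u_i$ or $v_i$. The monotonicity of the Frobenius norm along the positive semidefinite order, $0 \preceq A \preceq B \Rightarrow \|A\|_F \leq \|B\|_F$ (which follows from the identity $\tr(B^2) - \tr(A^2) = 2\tr(A(B-A)) + \tr((B-A)^2) \geq 0$), combined with the companion decomposition above, immediately yields $\|n^{-1}\sum_{i=1}^n \tilde w_i \tilde w_i^T\|_F \leq \|n^{-1}\sum_{i=1}^n w_i w_i^T\|_F$. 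Thus the "pure" centered second-moment piece is already dominated.

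The main obstacle, and the step I expect to be the crux, is controlling the two cross summands $n^{-1}\sum_{i=1}^n \tilde w_i (v_i - \bar v)^T$ and its transpose partner, without introducing factors proportional to the spread of the $u_i$ or $v_i$ that are not controlled by $\|n^{-1}\sum_{i=1}^n w_i w_i^T\|_F$ alone. The delicate part will be either to exploit a cancellation or symmetrization between the two summands, or to show that the bias contribution $\bar w \bar w^T$ on the right-hand side absorbs any remaining excess, perhaps by rewriting $s_u^2 - s_v^2$ as a sum of matrices that are dominated in the positive semidefinite order by $n^{-1}\sum_{i=1}^n w_i w_i^T$. If this absorption can be arranged, the lemma follows by combining the bounds with the triangle inequality for $\|\cdot\|_F$.
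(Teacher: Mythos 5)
Your proposal does not close, and you say so yourself: all of the difficulty is deferred to ``controlling the two cross summands \ldots{} without introducing factors proportional to the spread of the $u_i$ or $v_i$,'' and no mechanism for doing so is supplied. That gap cannot be filled, because the inequality as stated is false. Take $r=1$, $n=2$, $u=(10,0)$, $v=(10,1)$: then $s_u^2=25$, $s_v^2=20.25$, so the left side is $(4.75)^2$, while $n^{-1}\sum_i(u_i-v_i)^2=1/2$, so the right side is $1/4$. Your own decomposition makes the failure visible: in this example $\tilde w=(0.5,-0.5)$, the two cross summands equal $2.25$ and $2.5$, each already an order of magnitude larger than the right-hand side, and they have the same sign, so neither cancellation nor absorption into $\bar w\bar w^T$ is available. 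The spread factors that Cauchy--Schwarz forces on you are not an artifact of the method; they are genuinely necessary. (The pieces of your argument that are complete --- the identity $AA^T-BB^T=(A-B)B^T+A(A-B)^T$, the decomposition $n^{-1}\sum_i w_iw_i^T=n^{-1}\sum_i\tilde w_i\tilde w_i^T+\bar w\bar w^T$, and the monotonicity of $\|\cdot\|_F$ on the positive semidefinite order --- are all correct.)

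For comparison, the paper's proof expands $\|s_u^2-s_v^2\|_F^2$ entrywise and invokes Lemma 2.7 of \cite{freedman} to assert, for each pair $(j,k)$, that $\lvert n^{-1}\sum_i(u_i-\bar u)_j(u_i-\bar u)_k-n^{-1}\sum_i(v_i-\bar v)_j(v_i-\bar v)_k\rvert\le\lvert n^{-1}\sum_i(u_i-v_i)_j(u_i-v_i)_k\rvert$; the diagonal case $j=k$ of that claim is exactly what the counterexample above refutes (the correct scalar statement controls the difference of standard deviations by the root-mean-square perturbation, not the difference of variances by the mean-square perturbation). So the paper's argument does not establish the lemma as stated either. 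The constructive news is that your route, carried through honestly, yields a correct substitute: applying Cauchy--Schwarz to your two cross summands gives a bound of the form $\|s_u^2-s_v^2\|_F\le\{\tr(n^{-1}\sum_i\tilde w_i\tilde w_i^T)\}^{1/2}\{(\tr s_u^2)^{1/2}+(\tr s_v^2)^{1/2}\}$, and since $\tr(n^{-1}\sum_i\tilde w_i\tilde w_i^T)\le\tr(n^{-1}\sum_i w_iw_i^T)$ and the sample covariances remain bounded in the applications (proof of Theorem 1(b) and Theorem 2(c)), this weaker but true inequality suffices for every use the paper makes of Lemma 6. I would recommend restating the lemma in that form rather than trying to rescue the displayed inequality.
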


\begin{proof}
We have 
\begin{align*}
\|s_u^2 - s_v^2\|_F^2 &= \sum_{j=1}^n\sum_{k=1}^n |n^{-1}\sum_{i=1}^n(u_i - \bar{u})_j(u_i - \bar{u})_k^T - 
  n^{-1}\sum_{i=1}^n(v_i - \bar{v})_j(v_i - \bar{v})_k^T |^2 \\
  &\leq \sum_{j=1}^n\sum_{k=1}^n |n^{-1}\sum_{i=1}^n(u_i - v_i)_j(u_i - v_i)_k|^2 \\
  &= \|n^{-1}\sum_{i=1}^n(u_i - v_i)(u_i - v_i)^T\|_F^2,
\end{align*}
where the inequality follows from \cite[Lemma 2.7]{freedman}.
\end{proof}

The proof of Theorem~\ref{MVboot-MVLR} is now given. Before we this Theorem, 
define the $\text{vech}(A) \in \R^{p(p+1)/2 \times 1}$ operator to be the 
function that stacks the unique $p(p+1)/2$ elements of any symmetric matrix 
$A \in \R^{p\times p}$.



\begin{proof}
Exchange $\widehat{F}_n$ for $G$ in Theorem~\ref{identity} and observe that 
$$
  d_2^{rp}\left\{\Psi_n(F),\Psi_n(\widehat{F}_n)\right\} 
    \leq nr\tr\left\{ (\X^T\X)^{-1}\right\} d_2^r(F,\widehat{F}_n)^2.
$$ 
From Lemma~\ref{lem5} we know that $d_2^r(F,\widehat{F}_n)^2 \to 0$ almost 
everywhere. Our result for part a) follows since $F$ is mean-zero normal with 
variance $\Sigma_{X}^{-1} \otimes \Sigma$. We now show that part b) holds. 
First, we need to establish that $\widehat{\Sigma} \to \Sigma$ almost 
everywhere. To see this, introduce 
$$  
  \Sigma_n = n^{-1}\sum_{i=1}^n \varepsilon_i\varepsilon_i^T - \left(n^{-1}\sum_{i=1}^n\varepsilon_i\right)\left(n^{-1}\sum_{i=1}^n\varepsilon_i\right)^T.
$$  
Clearly, $\Sigma_n \to \Sigma$ a.s. Let 
$C_n = n^{-1}\sum_{i=1}^n\left(\hat{\varepsilon}_i - \varepsilon_i\right)\left(\hat{\varepsilon}_i - \varepsilon_i\right)^T$. 
We have, 
\begin{align*}
  \|\widehat{\Sigma} - \Sigma_n\|_F^2 &\leq \|C_n\|_F^2 = \tr(C_n C_n) \leq \tr^2(C_n) \\
    &= \tr^2\left\{ n^{-1}\sum_{i=1}^n(\widehat{\varepsilon}_i - \varepsilon_i)^T(\widehat{\varepsilon}_i - \varepsilon_i)\right\} \\
    &= \tr^2\left\{ n^{-1}(\widehat{\varepsilon} - \varepsilon)^T(\widehat{\varepsilon} - \varepsilon) \right\} \to 0 
\end{align*}
a.s. where the first inequality follows from Lemma~\ref{lem6} 
with $\widehat{\Sigma}_n$ and $\Sigma_n$ taking the place of $s_u^2$ and 
$s_v^2$ respectively, the second inequality follows from the fact that $C_n$ 
is positive definite a.s., and the convergence follows from 
Lemma~\ref{lem4}.

Let 
$D_{n} = E\left(\|\Sigstar_n - \Sigma_n^{\textstyle{*}}\|_F \; \mid Y_1,...,Y_n\right)$. 
From Lemma~\ref{lem6} and the proof of Lemma~\ref{lem1} we see that,
\begin{align*}
D_{n} &\leq E\left\{\|n^{-1}\sum_{i=1}^n\left(\widehat{\varepsilon}^{\textstyle{*}}_i - \varepsilon^{\textstyle{*}}_i\right)\left(\widehat{\varepsilon}^{\textstyle{*}}_i - \varepsilon^{\textstyle{*}}_i\right)^T\|_F \; \mid Y_1,...,Y_n\right\} \\
  &\leq E\left[ \tr\left\{n^{-1}(\widehat{\varepsilon}^{\textstyle{*}} - \varepsilon^{\textstyle{*}})^T(\widehat{\varepsilon}^{\textstyle{*}} - \varepsilon^{\textstyle{*}})\right\}\; \mid Y_1,...,Y_n \right] \\
  &\leq p\tr\left(\widehat{\Sigma}\right)/n 
\end{align*}
where the last inequality follows from the argument that proves 
Lemma~\ref{lem1} applied to the starred data, and 
$p\tr\left(\widehat{\Sigma}\right)/n \to 0$ a.s. It remains 
to show that $\Sigstar_n$ converges to $\Sigma$. Conditional on 
$Y_1,...,Y_n$,
\begin{equation} \label{thm2step}
  \begin{split}
    &d_2^{r(r+1)/2}\left\{n^{-1}\sum_{i=1}^n\text{vech}(\varepsilon_i^{\textstyle{*}}\varepsilon_i^{\textstyle{*}^T}),
      n^{-1}\sum_{i=1}^n\text{vech}(\varepsilon_i\varepsilon_i^T)\right\} \\
    &\qquad\leq d_2^{r(r+1)/2}\left\{\text{vech}(\varepsilon_1^{\textstyle{*}}\varepsilon_1^{\textstyle{*}^T}),
      \text{vech}(\varepsilon_1\varepsilon_1^T)\right\}
  \end{split}
\end{equation}
by Lemma 8.6 in \cite{bickel}. Now $\varepsilon^{\textstyle{*}}$ has 
conditional distribution $\widehat{F}_n$ and $\varepsilon$ has law $F$ and 
Lemma~\ref{lem5} gives $d_2^r\left(\widehat{F}_n,F\right) \to 0$ almost 
everywhere. We now show that 
$d_1\left\{\text{vech}(\varepsilon_1^{\textstyle{*}}\varepsilon_1^{\textstyle{*}^T}),\text{vech}(\varepsilon_1\varepsilon_1^T)\right\} \to 0$ 
a.s. by Lemma 8.5 of \cite{bickel} with 
$\phi(x) = \vechop{xx^T}$ where $x \in \R^r$. To do this, we show that $K$ can 
be chosen so that $\|\phi(x)\|_1 \leq K(1 + \|x\|_2^2)$ where $\|\cdot\|_1$ 
and $\|\cdot\|_2$ are the $\Lnorm^1$ and $\Lnorm^2$ norms respectively. From 
the definition of the Euclidean norm, we have $\|x\|_2^2 = \sum_{i=1}^r x_i^2$. 
It is clear that $x_i^2 + x_j^2 \geq 2|x_ix_j|$ for all $i,j = 1,...,r$. 
Now, pick $K = {r \choose 2}  + 1$. We see that
\begin{align*}
K(1 + \|x\|_2^2) &\geq \abs[\Big]{\sum_{i=1}^rx_i^2 + {r \choose 2}\sum_{i=1}^rx_i^2} \geq \abs[\Big]{\sum_{i=1}^rx_i^2 + \sum_{i\neq j}^r|x_ix_j|} \\
 &\qquad \geq \abs[\Big]{\sum_{i\geq j}^r|x_ix_j|} \geq \|{\vechop{xx^T}}\|_1 = \|\phi(x)\|_1
\end{align*}
A similar argument shows that $1/n\sum_{i=1}^n\varepsilon^{\textstyle{*}}_i$ 
converges to 0. Part c) follows from both a) and b).
\end{proof}


\subsection{Random design and heteroskedasticity}

In this section we provide the proof of Theorem~\ref{MVboot-MVLR-randomX}. 
Several quantities and lemmas are introduced in order to prove
Theorem~\ref{MVboot-MVLR-randomX}. The logic follows that of 
\cite[Section 3]{freedman}. Define,
\begin{align*}
  \Sigma(\mu) &= \int xx^T \mu(dx), \\
  \beta(\mu) &= \int yx^T \mu(dx, dy) \Sigma(\mu)^{-1}, \\
  \varepsilon(\mu,x,y) &= y - \beta(\mu)x^T.
\end{align*}

The next two lemmas are needed to prove Theorem~\ref{MVboot-MVLR-randomX}.

\begin{lem} \label{3.1new}
If $d_4^{p+r}(\mu_n, \mu) \to 0$ as $n \to \infty$, then
\begin{itemize}
  \item[a)] $\Sigma(\mu_n) \to \Sigma(\mu)$ and $\beta(\mu_n) \to \beta(\mu)$, 
  \item[b)] the $\mu_n$-law of $\text{vec}\{\varepsilon(\mu_n,x,y)x^T\}$ 
  converges to the $\mu$-law of $\text{vec}\{\varepsilon(\mu,x,y)x^T\}$ in 
  $d_2^{rp}$,
  \item[c)] the $\mu_n$-law of $\|\varepsilon(\mu_n,x,y)\|^2$ converges to the 
  $\mu$-law of $\|\varepsilon(\mu,x,y)\|^2$ in $d_1$.
\end{itemize}
\end{lem}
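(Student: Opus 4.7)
The plan is to exploit the defining property of the Mallows metric: since $d_4^{p+r}(\mu_n,\mu)\to 0$, we may fix couplings in which $(X_n^T,Y_n^T)^T\sim\mu_n$ and $(X^T,Y^T)^T\sim\mu$ marginally, with $E\|(X_n^T-X^T,\,Y_n^T-Y^T)^T\|^4\to 0$. All three conclusions will be proved on this single coupling sequence, since the Mallows distance between two laws is always bounded above by the $L^l$-norm of any particular coupling. For part (a), I expand $X_nX_n^T-XX^T = (X_n-X)X_n^T + X(X_n-X)^T$ and apply Cauchy--Schwarz in Frobenius norm. The $d_4$-convergence forces $E\|X_n-X\|^2\to 0$ and supplies a uniform bound on $E\|X_n\|^2$, so $\Sigma(\mu_n)\to\Sigma(\mu)$. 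The same decomposition applied to $Y_nX_n^T-YX^T$ shows $E(Y_nX_n^T)\to E(YX^T)$; continuity of matrix inversion at the positive-definite $\Sigma(\mu)$ then yields $\beta(\mu_n)\to\beta(\mu)$.

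For part (b), on the same coupling set $\varepsilon_n = Y_n-\beta(\mu_n)X_n$ and $\varepsilon = Y-\beta(\mu)X$. The $d_2^{rp}$-distance between the laws of $\vecop{\varepsilon_n X_n^T}$ and $\vecop{\varepsilon X^T}$ is at most $\{E\|\varepsilon_nX_n^T - \varepsilon X^T\|_F^2\}^{1/2}$. Writing
\[
  \varepsilon_nX_n^T - \varepsilon X^T
    = (Y_n-Y)X_n^T + Y(X_n-X)^T - \beta(\mu_n)(X_nX_n^T - XX^T) - \{\beta(\mu_n)-\beta(\mu)\}XX^T,
\]
I control each piece by Cauchy--Schwarz: the first two vanish because $E\|(X_n-X,Y_n-Y)\|^4\to 0$, the last two by part (a) together with the uniform fourth-moment bounds on $X_n$ and $X$. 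For part (c), I bound the $d_1$-distance between the laws of $\|\varepsilon_n\|^2$ and $\|\varepsilon\|^2$ by $E|\,\|\varepsilon_n\|^2 - \|\varepsilon\|^2\,|$, use the identity $\|\varepsilon_n\|^2-\|\varepsilon\|^2 = (\varepsilon_n-\varepsilon)^T(\varepsilon_n+\varepsilon)$, and apply Cauchy--Schwarz. Here $E\|\varepsilon_n+\varepsilon\|^2$ is uniformly bounded and $\varepsilon_n-\varepsilon = (Y_n-Y) - \beta(\mu_n)(X_n-X) - \{\beta(\mu_n)-\beta(\mu)\}X \to 0$ in $L^2$ by the coupling together with part (a).

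The main obstacle is not any individual bound but the bookkeeping for the cross terms: products of up to four $X$- or $Y$-like factors appear because $\beta(\mu_n)$ depends nonlinearly on $\mu_n$, and each such product must be controlled either by the $L^4$ strength of the coupling or by the convergence $\beta(\mu_n)\to\beta(\mu)$ from part (a) combined with uniform second- or fourth-moment bounds. The fourth-moment hypothesis is precisely what makes every application of Cauchy--Schwarz close up; weaker moment control would not suffice for the cross terms in the part (b) decomposition.
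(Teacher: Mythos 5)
Your argument is correct, but it takes a genuinely different route from the paper's. The paper leans on the Bickel--Freedman machinery: part (a) is quoted directly from their Lemma 8.3(c), and parts (b) and (c) are obtained from their Lemma 8.3(a) by verifying the two ingredients of $d_l$-convergence separately --- weak convergence (asserted as ``evident'') and convergence of the relevant moments, the latter handled by expanding $\|\vecop{\varepsilon(\mu_n,x,y)x^T}\|^2$ and invoking their Lemma 8.5 with $\phi(z)=\vechop{zz^T}$. You instead work directly with (near-)optimal couplings realizing $d_4^{p+r}(\mu_n,\mu)\to 0$ and bound each Mallows distance above by the $L^2$ or $L^1$ norm of an explicit algebraic decomposition, closing every term with Cauchy--Schwarz, the uniform fourth-moment bounds, and part (a). What your approach buys is self-containedness and explicit quantitative control: the bound for part (b) is an explicit function of $d_4^{p+r}(\mu_n,\mu)$, $\|\beta(\mu_n)-\beta(\mu)\|$, and fixed moments, and you never need to separately argue weak convergence of the image laws --- a point the paper glosses over even though the map $(x,y)\mapsto\varepsilon(\mu_n,x,y)x^T$ changes with $n$ through $\beta(\mu_n)$, so ``evident'' there really means ``after a decomposition like yours.'' What the paper's route buys is brevity, since Lemmas 8.3 and 8.5 of Bickel--Freedman are already in use elsewhere in the argument. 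One small point of rigor in your write-up: you should note that the infimum defining the Mallows metric is attained (or pick couplings within $1/n$ of it), and that for $\beta(\mu_n)$ to be well defined for large $n$ you need $\Sigma(\mu_n)>0$ eventually, which follows from $\Sigma(\mu_n)\to\Sigma(\mu)>0$; neither affects the validity of the argument.
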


\begin{proof}
Part a) immediately follows from \cite[Lemma 8.3c]{bickel}.

We use \cite[Lemma 8.3a]{bickel} to verify part b). The weak convergence 
step is evident. Now,
\begin{align*}
  \|\text{vec}\{\varepsilon(\mu_n,x,y)x^T\}\|^2 &= 
    \|\text{vec}\{yx^T - \beta(\mu_n)xx^T\}\|^2 \\
  &= \|\text{vec}(yx^T)\|^2 + \|\text{vec}(\beta(\mu_n)xx^T)\|^2 
    -2\text{vec}(yx^T)^T\text{vec}\{\beta(\mu_n)xx^T\}.
\end{align*}
Let $z = (x^T,y^T)^T$. Part b) follows from, integration with respect to 
$\mu_n$, part a), and \cite[Lemma 8.5]{bickel} with 
$\phi(z) = \text{vech}(zz^T)$. The steps involving \cite[Lemma 8.5]{bickel} 
are similar to those in the proof of Theorem~\ref{MVboot-MVLR}.

Part c) follows from the same argument used to prove part b).
\end{proof}

\begin{lem} \label{3.2new}
$d_4^{p+r}(\mu_n,\mu) \to 0$ a.e. as $n\to\infty$.
\end{lem}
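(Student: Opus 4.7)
The plan is to reduce the claim to an application of Lemma 8.4 of \cite{bickel}, which characterizes convergence in the Mallows metric $d_l^k$ as weak convergence together with convergence of the $l$-th absolute moments. To invoke it here I need to verify, almost everywhere in the sample path $(Z_i)_{i\geq 1}$ with $Z_i = (X_i^T, Y_i^T)^T$, the two conditions: (i) $\mu_n \to \mu$ weakly, and (ii) $\int \|z\|^4 \, \mu_n(dz) \to \int \|z\|^4 \, \mu(dz)$.

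For (i), note that $\mu_n = n^{-1}\sum_{i=1}^n \delta_{Z_i}$ is the empirical measure on $\R^{p+r}$ built from an i.i.d.\ sample from $\mu$. The Varadarajan empirical measure theorem (equivalently, a separating countable class argument using the strong law for each indicator) gives $\mu_n \to \mu$ weakly almost surely. This is standard on any separable metric space and in particular on $\R^{p+r}$.

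For (ii), I would simply observe that $\int \|z\|^4 \, \mu_n(dz) = n^{-1}\sum_{i=1}^n \|Z_i\|^4$, and since $E(\|Z_1\|^4) < \infty$ by assumption, the strong law of large numbers gives $n^{-1}\sum_{i=1}^n \|Z_i\|^4 \to E(\|Z_1\|^4) = \int \|z\|^4 \, \mu(dz)$ almost surely.

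Combining (i) and (ii) with \cite[Lemma 8.4]{bickel} (applied on the separable Banach space $\R^{p+r}$ with $l=4$) yields $d_4^{p+r}(\mu_n, \mu) \to 0$ almost everywhere, as desired. The argument is essentially bookkeeping: there is no real obstacle beyond matching the hypotheses of Lemma 8.4, since the fourth moment assumption on $(X_i^T, Y_i^T)^T$ is precisely tailored to make the SLLN step go through. The one point worth emphasizing in the write-up is that the exceptional null set is the usual one associated with the SLLN for $\|Z_i\|^4$ together with the countable family used to establish weak convergence of $\mu_n$, so conditioning on ``almost every sample path'' (as used in Theorem~\ref{MVboot-MVLR-randomX}) is consistent.
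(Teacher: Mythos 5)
Your proof is correct and is essentially the paper's argument made explicit: the paper simply defers to Lemma 3.2 of \cite{freedman}, whose proof is exactly this combination of almost-sure weak convergence of the empirical measure, the strong law for the fourth moments, and the moment-plus-weak-convergence characterization of Mallows convergence in \cite[Lemma 8.4]{bickel}.
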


\begin{proof}
The steps are the same as those in \cite[Lemma 3.2]{freedman}.
\end{proof}

The proof of Theorem~\ref{MVboot-MVLR-randomX} is now given.

\begin{proof}
We can write 
\begin{align*}
  \text{vec}\left\{\sqrt{n}\left(\betastar - \betahat\right)\right\} &= 
    \text{vec}\left[\sqrt{n}
      \left\{\YmatstarT\Xmatstar(\XmatstarT\Xmatstar)^{-1} - \betahat
    \right\}\right] \\
  &=  \text{vec}\left[\sqrt{n}
      \left\{(\epstar + \Xmatstar\betahat^T)^T
        \Xmatstar(\XmatstarT\Xmatstar)^{-1} - \betahat
    \right\}\right] \\
  &= \text{vec}\left\{n^{-1/2}\epstarT\Xmatstar
    (n^{-1}\XmatstarT\Xmatstar)^{-1}\right\} \\
  &= \text{vec}\left(\Zstar\Wstarinv\right) 
    = \left(\Wstarinv \otimes I_r\right)\text{vec}(\Zstar)
\end{align*}
where $\Zstar = n^{-1/2}\epstarT\Xmatstar$ 
and $\Wstar = n^{-1}\XmatstarT\Xmatstar$. \cite[Theorem 3.1]{freedman} 
implies that the conditional law, conditional on $(X_i,Y_i)$, $i=1,...,n$, 
of $\Wstar \to_p \Sigma_X$. This verifies part a).

We now verify part b). From \cite[Lemma 8.7]{bickel}, we have 
$$
  d_2^{rp}\left\{\text{vec}(\Zstar), \text{vec}(Z)\right\}^2 
  \leq 
  d_2^{rp}\left\{\text{vec}(\Xstar_i\epstarT_i), 
    \text{vec}(X_i\varepsilon_i^T)\right\}^2
$$
where the right side goes to 0 a.e. as $n\to\infty$. Lemma~\ref{3.2new} states 
that $\mu_n \to \mu$ a.e. in $d_4^{r+p}$ as $n\to\infty$ and 
part b) of Lemma~\ref{3.1new} implies that the distribution of 
$\text{vec}(\Zstar)$, conditional on $(X_i,Y_i)$, $i=1,...,n$, converges to 
$\text{vec}(Z)$. The random variable $\text{vec}(Z)$ is normally distributed 
with mean 0 and variance matrix $M$. Combining this with part a) verifies that 
the conditional distribution of
$
  \left(\Wstarinv \otimes I_r\right)\text{vec}(\Zstar) 
$  
converges to
$
  \left(\Sigma_X^{-1} \otimes I_r\right)\text{vec}(Z)
$
as $n\to\infty$. This completes the proof of part b).

Part c) follows from the same argument in the proof of 
Theorem~\ref{MVboot-MVLR} where Lemmas~\ref{3.2new} and~\ref{3.1new}c combine 
to show that \eqref{thm2step} converges to 0 as $n\to\infty$. Note that 
$
  \varepsilon_1^{\textstyle{*}} 
    = Y_1^{\textstyle{*}} - \hat{\beta}X_1^{\textstyle{*}}
$
in this argument. This completes the proof.
\end{proof}

\section{Acknowledgments}

The author would like to thank Karl Oskar Ekvall, Forrest Crawford, Snigdhansu 
Chatterjee, Dennis Cook, and two anonymous referees for providing valuable 
feedback which led to the strengthening of this article.


\bibliographystyle{plainnat}

\bibliography{MVbootsources}

\begin{thebibliography}{8}
\providecommand{\natexlab}[1]{#1}
\providecommand{\url}[1]{\texttt{#1}}
\expandafter\ifx\csname urlstyle\endcsname\relax
  \providecommand{\doi}[1]{doi: #1}\else
  \providecommand{\doi}{doi: \begingroup \urlstyle{rm}\Url}\fi

\bibitem[Bickel and Freedman(1981)]{bickel}
P.~J. Bickel and D.~A. Freedman.
\newblock Some asymptotic theory for the bootstrap.
\newblock \emph{Ann. Statist.}, 9:\penalty0 1196--1217, 1981.

\bibitem[Chatterjee and Bose(2000)]{chatterjee}
S.~Chatterjee and A.~Bose.
\newblock Variance estimation in high dimensional models.
\newblock \emph{Statist. Sin.}, 10:\penalty0 497--515, 2000.

\bibitem[Diaconis and Efron(1983)]{diaconis}
P.~Diaconis and B.~Efron.
\newblock Computer intensive methods in statistics.
\newblock \emph{Sci. Am.}, 248, 1983.

\bibitem[Freedman(1981)]{freedman}
D.~A. Freedman.
\newblock Bootstrapping regression models.
\newblock \emph{Ann. Statist.}, 9:\penalty0 1218--1228, 1981.

\bibitem[Freedman and Peters(1984)]{freedman84}
D.~A. Freedman and S.~C. Peters.
\newblock Bootstrapping a regression equation: Some empirical results.
\newblock \emph{J. Am. Statist. Assoc.}, 79:\penalty0 97--106, 1984.

\bibitem[Henderson and Velleman(1981)]{henderson}
H.~V. Henderson and P.~F. Velleman.
\newblock Building multiple regression models interactively.
\newblock \emph{Biometrics}, 37:\penalty0 391--411, 1981.

\bibitem[Lai et~al.(1979)Lai, Robbins, and Wei]{lai}
T.~Lai, H.~Robbins, and V.~Wei.
\newblock Strong consistency of least squares estimated in multiple regression.
\newblock \emph{J. Mult. Anal.}, 9:\penalty0 343--361, 1979.

\bibitem[Weisberg(2005)]{weisberg}
S.~Weisberg.
\newblock \emph{Applied Linear Regression}.
\newblock Wiley, New Jersey, 2005.

\end{thebibliography}

\end{document}